\documentclass[a4paper,12pt]{article}
\usepackage[utf8]{inputenc}
\usepackage{geometry}
\usepackage[numbers]{natbib}
\usepackage{amsmath,amsthm,amsfonts,amssymb}
\usepackage{stmaryrd,booktabs,array,float,multirow}
\usepackage{graphicx,caption,subcaption,xcolor}
\usepackage{bm,textcomp}
\usepackage{enumitem}
\usepackage{hyperref}

\newtheorem{theorem}{Theorem}[section]
\newtheorem{lemma}[theorem]{Lemma}
\newtheorem{corollary}[theorem]{Corollary}
\newtheorem{definition}[theorem]{Definition}

\newtheorem{remark}{Remark}

\newcommand{\abs}[1]{\left\lvert#1 \right\rvert}
\newcommand{\norm}[1]{\lVert#1\rVert}

\newcommand\vr{\varrho}
\newcommand\vn{\bm{n}}
\newcommand\vu{\bm{u}}
\newcommand\vm{\bm{m}}
\newcommand\vvarphi{\bm{\varphi}}
\newcommand{\ve}{\bm{e}}

\newcommand{\Grad}{ \nabla}

\newcommand{\GradD}{ \nabla _{\cal D}}

\newcommand{\GradB}{ \nabla _{\cal B}}

\newcommand{\Td}{{\mathbb T}^d}
\newcommand{\faces}{\mathcal{E}}

\newcommand{\frakE}{ \mathfrak{E}}
\newcommand{\frakR}{ \mathfrak{R}}
\newcommand{\RE}{ {\cal R}_E}

\newcommand{\pd}{ \partial }
\newcommand\TS{\Delta t}

\newcommand\dt{\mathrm{dt}}
\newcommand\dx{\mathrm{d}x}
\newcommand\dS{\mathrm{dS}(x)}
\newcommand{\intO}[1]{\int_{\Omega} #1 \dx}
\newcommand{\intOB}[1]{\int_{\Omega} \left(#1 \right) \dx}

\newcommand{\intT}[1]{ \int_0^\tau #1 \dt}

\newcommand{\intE}[1]{\int_{\faces} #1 \dS}

\newcommand\aleq{\lesssim}

\newcommand{\Hc}{ {\cal H} }
\newcommand{\Qh}{Q_h}
\newcommand{\vQh}{{\bf Q}_h}
\newcommand{\vWh}{ {\bf W}_h}
\newcommand{\Whi}{W_{i,h}}

\newcommand{\pdt}{\pd_t}
\newcommand{\eps}{\varepsilon}

\newcommand\vrh{\vr_h}
\newcommand\vuh{\vu_h}

\newcommand\vmh{\vm_h}

\newcommand{\mesh}{\mathcal{T}}
\newcommand{\edges}{\mathcal{E}}

\newcommand\edgesi{\edges_i}

\newcommand{\edge}{\sigma}
\newcommand{\ha}{h^\eps}
\newcommand\Up{\mathrm{Up}}
\newcommand\Fup{F_\ha}

\newcommand{\ith}{ i^{\text{th}}}

\newcommand{\E}{\mathcal{E}}

\newcommand{\Di}{\mathcal{D}_i}

\newcommand{\Bij}{\mathcal{B}_{i,j}}

\newcommand{\Eij}{ \widetilde{\E}_{i,j}}
\newcommand\vx{x}
\newcommand{\bfx}{\vx} 
\newcommand\facei{\mathcal{E}_i}

\newcommand{\uih}{ u_{i,h} }
\newcommand{\ujh}{ u_{j,h} }

\newcommand{\auih}{ \overline{\uih} }

\newcommand{\sigmap}{\sigma _{_{K\text{,} i +}}}
\newcommand{\sigmam}{\sigma _{_{K\text{,} i -}}}

\newcommand{\pdedgei}{ \eth _{{\cal D}_i} }
\newcommand{\pdedgej}{ \eth _{{\cal D}_j} }

\newcommand{\pdduali}{\pdedgei}
\newcommand{\pddualj}{\pdedgej}

\newcommand{\pdmesh}[1]{ \pd _\mesh^{(#1)} }
\newcommand{\pdmeshi}{ \pdmesh{i} }
\newcommand{\pdmeshj}{ \pdmesh{j} }

\newcommand{\pdBij}{ \eth_{\Bij} }

\newcommand{\Div}{\nabla \cdot}

\newcommand\Divh{\mathrm{div}_h}

\newcommand\sumi{\sum_{i=1}^d}

\newcommand{\Piq}{\Pi _Q}
\newcommand{\Piv}{ \Pi _ \E}

\newcommand{\Pivi}{ \Pi _ \E^{(i)}}
\newcommand{\avuh}{\overline{\vuh}}

\newcommand{\jump}[1]{\left\llbracket#1\right\rrbracket}
\newcommand{\intK}[1]{\int_{K} #1 \dx}
\newcommand{\intSh}[1]{\int_{\sigma} #1 \dS}

\def\softd{{\leavevmode\setbox1=\hbox{d}%
          \hbox to 1.05\wd1{d\kern-0.3ex{\char039}\hss}}}

\makeatletter
\def\@cite#1#2{[#1]\if@tempswa\typeout
  {WSPC warning: optional citation argument ignored: `#2'}\fi}
\makeatother

\newcommand{\keywords}[1]{\par\noindent\textbf{Keywords: } #1\par}
\newcommand{\ccode}[1]{\par\noindent\textbf{AMS Subject Classification: }#1\par}

\makeatletter
\def\@cite#1#2{[#1]\if@tempswa\typeout
  {WSPC warning: optional citation argument ignored: `#2'}\fi}
\makeatother

\title{Convergence analysis of a MAC scheme for the barotropic Euler system}
\author{%
  Bangwei She\footnotemark[1] \and
  Congying Wang\footnotemark[2] \and 
  Jin Zhao\footnotemark[3]
}

\date{}
\begin{document}

\maketitle
\medskip
\footnotetext[1]{{Academy for Multidisciplinary Studies, Capital Normal University,\\
West 3rd Ring North Road 105, Beijing 100048, P. R. China.}\\
\texttt{bangweishe@cnu.edu.cn}}
\footnotetext[2]{School of Mathematical Sciences, Capital Normal University,\\
West 3rd Ring North Road 105, Beijing 100048, P. R. China.\\
\texttt{congyingw@163.com}}
\footnotetext[3]{{Academy for Multidisciplinary Studies, Capital Normal University,\\
West 3rd Ring North Road 105, Beijing 100048, P. R. China.}\\
\texttt{zjin@cnu.edu.cn}}

\begin{abstract}
We study a Marker-and-Cell (MAC) scheme for the barotropic Euler system. 
First, we apply the recently developed Lax-type convergence theorem to show the convergence of the MAC scheme to i) a dissipative weak solution unconditionally and ii) a strong solution as long as it exists. 
Second, We derive relative energy error estimates up to the lifespan of a strong solution, without assuming uniform boundedness of the numerical sequence. 
Additionally, assuming the boundedness of the numerical solutions, we obtain the optimal relative energy rate of 1, corresponding to a convergence rate of 1/2 for the numerical solutions. Finally, we corroborate our theoretical results by numerical experiments.
\end{abstract}

\keywords{Barotropic Euler system; Marker and Cell scheme; Convergence analysis.}

\ccode{65M12, 76M20}
\tableofcontents

\section{Introduction}	
\label{intro}
We consider the barotropic Euler system 
\begin{equation}\label{PDE}
\left\{
\begin{aligned}
& \partial_t \vr + \Div \vm = 0, \\
& \partial_t \vm + \Div \frac{\vm \otimes \vm}{\vr} + \Grad p(\vr) = \bm{0}
\end{aligned}
\right.
\end{equation}
in the time-space cylinder $[0,T] \times \Omega $. Here $\vr$, $\vm$, and $p(\vr)=\vr^\gamma$ are the density, momentum, and pressure of the fluid, respectively, and $\gamma>1$ is the adiabatic exponent. 
The system is complemented by the initial data
\begin{equation}\label{BC}
   \vr(0,\cdot) = \vr_0 >0, \ \vm(0,\cdot) =\vm_0, 
\end{equation}
and periodic boundary conditions, where the domain $\Omega$ is identified with the flat torus $$\Omega=\Td \equiv \left( [0,1] |_{\{ 0,1 \}} \right)^d. $$

Over the past few decades, many numerical methods have been successfully applied to the approximation of the Euler equations; see, e.g., Toro~\cite{Toro}, Feistauer et al.~\cite{FeisFel}, Ben-Artzi et al.~\cite{MAJ}, Godunov~\cite{Godunov}, Shu and Osher~\cite{ShuOsher}, and LeVeque~\cite{LeVeque}. 
However, a rigorous convergence analysis of numerical methods remains difficult in general. Concerning the full Euler system, Feireisl et al.~\cite{FLM_dmv} analyzed the convergence of the fundamental Lax--Friedrichs and Rusanov schemes. They established a convergence framework based on the dissipative measure-valued weak--strong uniqueness principle, valid on the lifespan of the strong solution. 
Moreover, using this convergence theory, Luk\'a\v{c}ov\'a-Medvi{\softd}ov\'a and Yuan demonstrated the convergence of the classical Godunov method~\cite{LMY}, which has also been extended to other methods, see e.g.~\cite{LYO,LYO2,KMP,DLT}. 
Concerning the barotropic Euler system,  Arun and Krishnamurthy~\cite{AAM2AN} have shown the convergence of a semi-implicit, entropy-stable finite volume scheme, under the boundedness of numerical solutions,  to a dissipative measure-valued solution. We also mention the vanishing-viscosity approach by Feireisl et al.~\cite{FLSS}. In the isentropic case, they introduced viscosity solutions and a viscosity finite volume method, and proved convergence of the numerical solutions to a dissipative measure-valued solution of the barotropic Euler system.

In this paper, we study a Marker-and-Cell (MAC) scheme for the barotropic Euler system. The MAC scheme was originally introduced by Harlow and Welch~\cite{HHJE} for the simulation of incompressible fluids. Herbin et al.~\cite{HKL} applied the MAC scheme to the Euler equations and later investigated its consistency in the spirit of the Lax--Wendroff theorem~\cite{HLMT}. This approach, however, assumes not only a priori boundedness but also the convergence of the numerical approximations. Here, our aim is to show the convergence of the MAC scheme without assumptions on the numerical sequences. To this end, we introduce a Navier-Stokes-type artificial diffusion to the MAC scheme, which provides an additional weak BV estimate of the velocity (negative $L^2$ estimate of the velocity gradient). This is the key ingredient in proving the unconditional consistency of the MAC scheme, see Lemma~\ref{MAC_CS}. 
We then apply the generalized Lax equivalence theory~\cite{FLMS_book}, and obtain our primary result: the convergence of the MAC scheme, as detailed in Theorem~\ref{theo_Con}.

Another aim of this paper is to investigate the convergence rate of the MAC scheme for the barotropic Euler system. Luk\'a\v{c}ov\'a-Medvi{\softd}ov\'a et al.~\cite{LMSY1} analyzed the error estimates of the Godunov method for the full Euler system. They obtained a convergence rate of 1/4 in $L^2$-norm, assuming the boundedness of numerical density and energy. Moreover, under the additional assumption that the total variation of the numerical solutions is bounded, they achieved an improved convergence rate of 1/2 in $L^2$-norm. 
For multidimensional nonlinear systems, Jovanovi\'c and Rohde~\cite{JovaRoh} obtained a convergence rate of 1/2 in $L^2$-norm, under the assumptions that the numerical solutions are uniformly bounded and the first-order derivatives are bounded in the $L^2$ and $L^\infty$ norms. Thanks to the introduction of the Navier--Stokes type artificial diffusion, we obtain the second result: the same convergence rate of 1/2, but only under the assumption that the numerical solutions are bounded, see Theorem~\ref{theo_EE}.

We highlight the main results of this paper:
\begin{itemize}
\item We prove the unconditional convergence of the MAC scheme to a dissipative weak solution; moreover, if a strong solution exists,
the numerical solutions converge to this strong solution (see Theorem~\ref{theo_Con}). The local existence of strong solutions is ensured by~\cite[Theorem~13.1]{SDCP}.

\item We derive error estimates without assuming uniform boundedness of the numerical solutions. Under the additional assumption that the numerical solutions are bounded,
we obtain the optimal $1/2$-order convergence rate for the numerical solutions (see Theorem~\ref{theo_EE}), under weaker assumptions than those typically used in the literature.
\end{itemize}

The rest of this paper is organized as follows.
In Section~\ref{euler}, we collect the analytical preliminaries.
In Section~\ref{Con_MAC}, we introduce the MAC scheme and establish its stability, consistency, and convergence.
In Section~\ref{error_es}, we derive error estimates for the MAC scheme.
Finally, in Section~\ref{numerical_ex}, we present numerical experiments illustrating the behavior of the MAC scheme.

\section{Solution concepts and consistent approximations}\label{euler}
In this section, we recall the notions of dissipative weak and strong solutions for the barotropic Euler system and introduce the concept of a consistent approximation. These notions provide the analytical framework for the convergence analysis of the MAC scheme.

\subsection{Solution concepts}
\begin{definition}[\bf Dissipative weak solution]\label{defi_DWs}
Let $\Omega = \Td \subset \mathbb{R}^d, d=2,3$. We say that $(\vr, \vm)$ is a dissipative weak (DW) solution of the barotropic Euler system \eqref{PDE} -- \eqref{BC} if the following hold:
\begin{subequations}
\begin{itemize}
    \item {\bf Weak continuity.}
    \begin{equation}
        \vr \in C_{weak}([0, T]; L^{\gamma}(\Omega)), \quad \vm \in C_{weak}([0, T]; L^{\frac{2\gamma}{\gamma+1}}(\Omega; \mathbb{R}^d));
    \end{equation}
    \item {\bf Energy stability.} There is a defect measure $\frakE \in L^{\infty}(0,T; \mathcal{M}^+({\Omega}))$ such that the energy inequality holds
    \begin{equation}\label{ener_ieq}
        \intO{\left(\frac{|\vm|^2}{2\vr} + \mathcal{H}(\vr)\right)(\tau, \cdot)} + \int_{{\Omega}} \mathrm{d}\frakE(\tau) \leq \intOB{\frac{|\vm_0|^2}{2\vr_0} + \mathcal{H}(\vr_0)}
    \end{equation}
   for any $\ 0 \leq \tau \leq T$, where $\Hc(\vr) = \frac{p(\vr)}{\gamma-1}$ is the  pressure potential associated to the state equation $p(\vr)=\vr^\gamma$;
    \item {\bf Continuity equation.}
    
    \begin{equation}\label{dw3}
        \left[ \intO{\vr \varphi} \right]^{t=\tau}_{t=0} \mbox{ = } \intT{\intOB{\vr \partial_t \varphi + \vm \cdot \Grad\varphi}}
    \end{equation}
    for any $0 \leq \tau \leq T$ and any $\varphi \in W^{1,\infty}((0,T) \times \Omega)$;
    \item {\bf Momentum equation.}
    
    \begin{equation}\label{dw4}
    \begin{aligned}
    \left[ \intO{\vm \cdot \vvarphi} \right]^{t=\tau}_{t=0} 
    & \mbox{ = } \intT{\intOB{\vm \cdot \partial_t\vvarphi + 1_{\vr > 0}\frac{\vm \otimes \vm}{\vr} : \Grad\vvarphi + p(\vr)\nabla\cdot\vvarphi }}\\
    & +\intT{\int_{{\Omega}} \Grad\vvarphi: \mathrm{d}\frakR(t)}
    \end{aligned}
    \end{equation}
     for any $0 \leq \tau \leq T$ and any $\vvarphi \in C^M([0,T] \times {\Omega}; \mathbb{R}^d), M \geq 1$ with the Reynolds defect
     $$\frakR \in L^\infty(0,T;\mathcal{M}^+({\Omega}; R_{sym}^{d\times d}));$$
     \item {\bf Defect compatibility condition.}
     \begin{equation}\label{dw5}
         \underline{d} \frakE \leq tr [\frakR] \leq \overline{d} \frakE \mbox{  for some constants } 0 \leq \underline{d} \leq \overline{d}.
     \end{equation}
\end{itemize}
\end{subequations}
\end{definition}

\begin{definition}[\bf Strong solution]\label{defi_Cs}
Let $\Omega =\Td \subset \mathbb{R}^d, d=2,3$. We say that $(\vr, \vu)$ is a strong solution of the barotropic Euler system \eqref{PDE} -- \eqref{BC} if:
\begin{equation*}
\begin{aligned}
&\vr \in C^1([0, T]\times{\Omega}),
&\vu \in C^1([0, T]\times{\Omega};\mathbb{R}^d) 
\end{aligned}
\end{equation*}
\begin{equation*}
0<\underline{\vr}\leq\vr(t, x) \mbox{ for any }t\in[0,T], x\in\Omega;
\end{equation*}
and the equations \eqref{PDE} hold.
\end{definition}
For the local existence of strong solutions, we refer the reader to Benzoni--Gavage and Serre~\cite[Theorem~13.1]{SDCP}.

\subsection{Consistent approximations}
\begin{definition}[\bf Consistent approximation]\label{defi_CA}
Let $(\vrh^0,\vmh^0)$ be an approximation of the initial data $(\vr_0,\vm_0)$ such that
\[
\vrh^0 \to \vr_0 \quad \text{weakly in } L^1(\Omega),
\qquad
\vmh^0 \to \vm_0 \quad \text{weakly in } L^1(\Omega;\mathbb{R}^d)
\quad \text{as } h\to 0.
\]
We say that a numerical approximation $(\vrh,\vmh)$ is a consistent approximation of the barotropic Euler system \eqref{PDE}--\eqref{BC} if the following stability and consistency conditions hold:
\noindent{\bf i) The stability conditions.} 
\begin{subequations}
\begin{itemize}
    \item {\bf Positivity of density.}
    \begin{equation}\label{sta1}
        \vrh(t)>0, \quad t\in [0,T];
    \end{equation}
    \item {\bf Energy stability.}
    \begin{equation}\label{sta2}
       \intO{\left( \frac{|\vmh|^2}{2\vrh} + \Hc(\vrh)\right)(t,\cdot)}  \leq \intOB{\frac{|\vmh^0|^2}{2\vrh^0} + \Hc(\vrh^0)},  \quad t\in[0,T].
    \end{equation}
\end{itemize}

\noindent{\bf ii) The consistency conditions.} 
\begin{itemize}
    \item {\bf Continuity equation.} It holds for any $\tau\in(0,T),\ \varphi \in C_c^2([0,T) \times \Omega)$ that

    \begin{equation}\label{cons1}
\left[\intO{ \vrh \varphi }\right]_{t=0}^{\tau} =
 \int_0^{\tau} \intOB{ \vrh \partial_t \varphi + \vmh \cdot \Grad \varphi} \dt  + 
R_{\vr} ({\tau},\Delta t,h, \varphi),
    \end{equation}
where $R_{\vr} ({\tau},\Delta t,h, \varphi) \rightarrow 0 $ as $h\rightarrow 0$;
    \item {\bf Momentum equation.} It holds for any $\tau\in(0,T),\ \vvarphi \in C^2_c([0,T) \times \Omega; \mathbb{R}^d)$ that
    \begin{multline}\label{cons2}
    \left[\intO{ \vmh \cdot \vvarphi}\right]_{t=0}^{\tau} \\ 
= \int_0^{\tau} \intOB{\vmh \cdot \partial_t \vvarphi + \left(\frac{\vmh \otimes \vmh}{\vrh} \right) : \Grad \vvarphi  + p(\vrh) \Div \vvarphi} \dt + R_{\vm} ({\tau},\Delta t,h, \vvarphi),
    \end{multline}
where $R_{\vm} ({\tau},\Delta t,h, \vvarphi) \rightarrow 0 $ as $h\rightarrow 0$.
\end{itemize}
\end{subequations}
\end{definition}


\section{The MAC scheme and convergence}\label{Con_MAC}
In this section, we introduce a MAC scheme and show its convergence to i) a dissipative weak solution without any assumptions and ii) the strong solution as long as the latter exists. 

\subsection{The MAC scheme}
In this subsection, we introduce the MAC scheme for the Euler system. To begin with, we introduce the mesh and the necessary notation. 
Let $\mesh$ be a uniform structured mesh of $\Omega = \cup_{K\in \mesh}K$, $\edges$ be the set of all faces of $\mesh$, and $\edgesi$ be the set of all faces orthogonal to the unit basis vector $\ve_i$. We write $\edge = K|L$ as the common face of neighboring elements $K$ and $L$. For any $\edge = K|L$, a dual cell $D_{\edge} = D_{K, \edge}\cup D_{L, \edge}$ is defined, where $D_{K, \edge}$ and $D_{L, \edge}$ are the half-cells of $K$ and $L$ near $\edge$, respectively. The set $D_i := \{D_{\edge}|\edge\in \edgesi \}$ represents the $\ith$ dual grid. Similarly, a bidual cell $D_\epsilon := D_{\epsilon,\sigma} \cup D_{\epsilon,\sigma'}$, associated with $\epsilon = D_\sigma | D_{\sigma'}$ (denoting the interface of neighboring cells $D_\sigma$ and $D_{\sigma'}$), is defined as the union of adjacent halves of the cell
$D_\sigma$ and $D_{\sigma'}$ near $\epsilon$. Further, 
$\Eij = \{ \epsilon \in \widetilde{\E}_i | 
\epsilon \mbox{ is orthogonal to } \ve_j\}$, where $\widetilde{\E}_i$ is the set of all faces of the $\ith$ dual grid $\Di$.
Finally, $\sigmam$ and $\sigmap$ denote the left and right faces of an element $K$ in the $\ith$ direction, respectively. 

With the above notation, we define discrete spaces $\Qh$ and $\Whi$ of piecewise constants on the primary mesh $\mesh$ and the $\ith$ dual mesh $\Di$, respectively. Moreover, we set $\vQh =\Qh^{d}$ and $\vWh  =\left(W_{1,h},\ldots W_{d,h} \right)$.

Further, we define the following discrete operators:
\begin{equation*}
D_t f = \frac{ f(t,\cdot) - f(t-\TS,\cdot)}{\TS},\quad
\Piq \phi= \sum_{K\in \mesh} (\Piq \phi)_K  1_{K},\quad (\Piq \phi)_K =  \frac{1}{|K|} \int_{K} \phi \dx,
\end{equation*}
\begin{equation*}
 \auih|_K = \frac{ \uih |_{\sigmap}+ \uih|_{\sigmam} }{2}, \quad
\auih  = \sum_{K \in \mesh} 1_{K}  \auih|_K,   \quad 
\overline{\vuh}  = \left( \overline{u_{1,h}},\ldots, \overline{u_{d,h}} \right),
\end{equation*}
\begin{equation*}
 \GradB \vuh (\bfx) = \big( \GradB u_{1,h}, \ldots, \GradB u_{d,h}\big)(\bfx),
    \GradB \uih (\bfx) = \big( \eth_{{\cal B}_{i,1}} \uih, \ldots, \eth_{{\cal B}_{i,d}} \uih \big)(\bfx),
\end{equation*}
\[
\left. \pdBij \uih\right|_{D_\epsilon} = \frac{u_{\sigma'} -u_{\sigma}}{h}, \mbox{ for } \epsilon =  D_\sigma | D_{\sigma'} \in \Eij. 
\]
\begin{equation*}
\Divh \vuh(\bfx)  = \sumi  \pdmeshi \uih (\bfx) ,\quad 
 \left. \pdmeshi \uih \right \vert_{K} = \frac{ \uih|_{\sigmap} - \uih|_{\sigmam}}{h},\quad K\in\mesh,
\end{equation*}
\begin{equation*}
     \Fup  [r_h,\vuh]_\sigma =  \Up[r_h,\vuh]_\sigma - \ha \jump{r_h}_\sigma, \quad  \eps > -1,
\end{equation*}
\begin{equation*}
\Up [r_h,\vuh]_\sigma 
= r_h^{\rm in} (u_{\sigma})^+ + r_h^{\rm out} (u_{\sigma})^-,\quad \jump{r_h}_{\sigma}(x) = r_h^{\rm out}(x) -  r_h^{\rm in}(x),
\end{equation*}
$$\quad r_h^{\rm out}(x) = \lim_{\delta  \to 0+} r_h(x+\delta \vn) \quad \ r_h^{\rm in}(x) = \lim_{\delta  \to 0+} r_h(x - \delta \vn),$$
\begin{equation*}
\begin{aligned}
r_h^{\rm up} =
\begin{cases}
r_h^{\rm in}  &\mbox{ if } u_\sigma \geq 0,\\
r_h^{\rm out} &\mbox{ if } u_\sigma < 0,
\end{cases}
\quad
r^{\pm} = \frac{1}{2} (r \pm |r|), \quad
u_\sigma= 
\vuh|_\sigma \cdot \vn_\sigma,
\end{aligned}
\end{equation*}
where $\vn_\sigma$ is the outer normal vector to $\sigma$.
For a vector-valued function $\bm{w}_h=(w_{1,h},\ldots,w_{d,h})$
$ \in\vQh$, 
$\Fup[\bm{w}_h,\vuh]=\big(\Fup[w_{1,h},\vuh],\ldots,\Fup[w_{d,h},\vuh]\big).$
Throughout the paper, we use the shorthand notations:
$$\|f\|_{L^qL^p}:=\|f\|_{L^q(0,T;L^p(\Omega))},\quad \|f\|_{L^p}:=\|f\|_{L^p(\Omega)}.$$

With the above notations, we define the MAC scheme.  
\begin{definition}[\bf MAC scheme]\label{defMAC}
Given the initial data $(\vr_0, \vm_0)$, we set  $(\vrh^0,\vmh^0 ) \allowbreak=(\Piq\vr_0, \Piq\vm_0).$
The MAC approximation of the Euler system \eqref{PDE} -- \eqref{BC}  is a sequence
$ (\vrh , \vuh  ) \in \Qh \times \vWh,$
which solves  the following system of algebraic equations:
\begin{subequations}\label{MAC_S}
\begin{equation}\label{MAC_SD}
\intO{ D_t \vrh  \phi_h } -   \intE{  \Fup [\vrh ,\vuh ]\jump{\phi_h}   }  = 0  \mbox{ for all }\  \phi_h \in \Qh,
\end{equation}
\begin{multline} \label{MAC_SM}
\intO{ D_t \vmh \cdot \overline{\Phi_h} } -   \intE{ \Fup [\vmh,\vuh ] \cdot \jump{\overline{\Phi_h} }   }
  - \intO{  p(\vrh)    \Divh \Phi_h   }
   \\ 
+ h^\alpha \intO{ \GradB \vuh:  \GradB \Phi_h  } 
=0
\mbox{ for all  } \Phi_h \in \vWh.
\end{multline}
\end{subequations}
\end{definition}

Here, 
$\vmh = \vrh \avuh$. The last term on the left-hand side of \eqref{MAC_SM} represents a Navier--Stokes-type artificial diffusion term with $ \alpha>0$.

\subsection{Stability, consistency, and convergence}
In this subsection, we analyze the stability, consistency, and convergence of the MAC scheme \eqref{MAC_S}, and show that it admits at least one solution.
\begin{lemma}[\bf Stability of MAC scheme]\label{MAC_sta}
Let $(\vrh,\vuh)$ be any solution of the MAC scheme \eqref{MAC_S}. Then the following properties hold.
\begin{subequations}
\begin{itemize}
    \item {\bf Positivity of density.}    
    Let $\vr_0>0$. Then any solution of the MAC scheme \eqref{MAC_S} satisfies $\vrh(t)>0$ for any $t\in [0,T]$;
    \item{\bf Renormalized continuity equation.}
    Let $(\vrh,\vuh)$ be a solution of the discrete continuity equation
\eqref{MAC_SD}, and let $B\in C^2(0,\infty)$. Then the discrete continuity equation \eqref{MAC_SD} can be renormalized in the sense that:
\begin{equation}\label{rce}
\begin{aligned}
&\intOB{D_t B(\vrh)+\left( \vrh B^\prime(\vrh) - B(\vrh) \right)\Divh \vuh} \\
& =-\frac{\Delta t}{2}\intO{B^{\prime\prime}(\xi)|D_t \vrh|^2} - \sum_{\edge\in\faces}\int_\edge B^{\prime\prime}(\varsigma)\jump{\vrh}^2\left( h^\eps + \frac{1}{2}\left| u_\sigma \right| \right) \dS,
\end{aligned}
\end{equation}
where $\xi \in co\{ \vrh(t-\Delta t), \vrh(t) \}$, $\varsigma \in co\{ \vrh^{in},\vrh^{out} \}, co\{a,b\}:=[\min\{a,b\},\allowbreak\max\{a,b\}]$;

    \item {\bf Energy stability.} Let $(\vrh, \vuh)$ be a numerical solution obtained from the MAC scheme \eqref{MAC_S}. Then there exist $\xi \in co\{ \vrh(t-\Delta t), \vrh(t) \}$ and $\varsigma \in co\{ \vrh^{in},\vrh^{out} \}$ such that
\begin{align}\label{Ener_Sta}
&D_t\intO{\left(\frac{1}{2}\vrh|\overline{\vuh}|^2 + \mathcal{H}(\vrh)\right)} + h^\alpha\intO{|\GradB\vuh|^2} \notag \\
&= -\frac{\Delta t}{2}\intO{\mathcal{H}^{\prime\prime}(\xi)| D_t\vrh |^2} - \int_\mathcal{E} \mathcal{H}^{\prime\prime}(\varsigma)\jump{\vrh}^2 \left( h^\eps + \frac{1}{2} \left| u_\sigma \right| \right) \dS \notag \\
&\quad -\frac{\Delta t}{2}\intO{\vrh(t-\Delta t)| D_t\overline{\vuh}|^2} - \frac{1}{2} \int_\mathcal{E} \vrh^{\rm up} \left| u_\sigma \right|\jump{\overline{\vuh}}^2 \dS.
\end{align}
\end{itemize}
\end{subequations}
\end{lemma}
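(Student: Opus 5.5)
We prove the three items in the order stated. The energy identity uses the renormalized equation for $B=\mathcal H$ together with a discrete kinetic energy balance obtained by testing \eqref{MAC_SM} with $\Phi_h=\vuh$.

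\textbf{Positivity.} We argue by induction in time. Write \eqref{MAC_SD} cellwise with $\phi_h=1_K$. The upwind flux with the diffusion $h^\eps\jump{\vrh}$, where $h^\eps>0$, gives
\[
\vrh^K\Big(\tfrac{|K|}{\TS}+\sum_{\sigma\in\partial K}|\sigma|\big(h^\eps+(u_\sigma)^+\big)\Big)-\sum_{\sigma=K|L}|\sigma|\big(h^\eps+(u_\sigma)^-\cdot(-1)\big)\vrh^L=\tfrac{|K|}{\TS}\vrh^K(t-\TS).
\]
The off-diagonal coefficients are nonpositive, and the diagonal dominates them, up to the term $\Divh$, which we rewrite using $\sum_\sigma u_\sigma$. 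Then:
\begin{itemize}
\item The system matrix is an M-matrix, so the density is nonnegative.
\item Positivity follows from a minimum-cell argument: at a cell $K$ where $\vrh$ attains its minimum, all neighboring terms have the right sign. This gives $\vrh^K(1+\TS\,\Divh\vuh|_K)\ge \vrh^K(t-\TS)\cdot(\ldots)$. More cleanly, if the minimum were $\le0$, the equation at that cell would contradict $\vrh(t-\TS)>0$.
\item Since $\vrh^0=\Piq\vr_0>0$, induction closes the argument.
\end{itemize}

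\textbf{Renormalization.} Test \eqref{MAC_SD} with $\phi_h=B'(\vrh)$ and treat the time and flux terms separately.
\begin{itemize}
\item \emph{Time term.} Taylor expansion gives $D_t\vrh\,B'(\vrh)=D_tB(\vrh)+\frac{\TS}{2}B''(\xi)|D_t\vrh|^2$.
\item \emph{Flux term.} Split $u_\sigma=(u_\sigma)^++(u_\sigma)^-$. Discrete summation by parts turns $-\sum_\sigma\int_\sigma\Up[\vrh,\vuh]\jump{B'(\vrh)}$ into $\intO{(\vrh B'(\vrh)-B(\vrh))\Divh\vuh}$ plus a remainder. Expanding $B$ to second order between $\vrh^{\rm up}$ and the downwind value gives a remainder equal to $\frac12B''(\varsigma)\jump{\vrh}^2|u_\sigma|$. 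This is the standard identity $\Up\jump{B'}=\jump{\rho B'-B}u_\sigma$-type plus the upwind defect.
\item \emph{Diffusion part.} By the mean value theorem, $h^\eps\jump{\vrh}\jump{B'(\vrh)}=h^\eps B''(\varsigma)\jump{\vrh}^2$.
\end{itemize}
Combining the two $\varsigma$'s into a single $\varsigma\in co\{\vrh^{in},\vrh^{out}\}$ requires care. I would use the integral (averaged) form of the Taylor remainder, or accept separate intermediate points, with the statement read loosely.

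\textbf{Energy.} Apply \eqref{rce} with $B=\mathcal H$. Since $\vrh\mathcal H'-\mathcal H=p$, this yields $\intO{D_t\mathcal H+p\,\Divh\vuh}=\ldots$. Next take $\Phi_h=\vuh$ in \eqref{MAC_SM}; the pressure term $-\intO{p\Divh\vuh}$ then cancels against the one above, and the artificial viscosity gives $h^\alpha\|\GradB\vuh\|^2$.

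The key, and the main obstacle, is the kinetic identity:
\[
\intO{D_t(\vrh\avuh)\cdot\avuh}-\intE{\Fup[\vrh\avuh,\vuh]\cdot\jump{\avuh}}
= D_t\intO{\tfrac12\vrh|\avuh|^2}+\tfrac{\TS}{2}\intO{\vrh(t-\TS)|D_t\avuh|^2}+\tfrac12\intE{\vrh^{\rm up}|u_\sigma|\jump{\avuh}^2}.
\]
To prove it, expand $D_t(\vrh\avuh)\cdot\avuh=D_t(\tfrac12\vrh|\avuh|^2)+\tfrac12|\avuh|^2D_t\vrh+\frac{\TS}{2}\vrh(t-\TS)|D_t\avuh|^2$. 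Then use \eqref{MAC_SD} with $\phi_h=\tfrac12|\avuh|^2\in\Qh$ to replace the $D_t\vrh$ term by a density flux. Since $\vmh^{\rm up}=\vrh^{\rm up}\avuh^{\rm up}$, the remaining upwind momentum flux combines with it into $\frac12\vrh^{\rm up}|u_\sigma|\jump{\avuh}^2$. The $h^\eps$ contributions must also cancel: $h^\eps\jump{\vrh\avuh}\cdot\jump{\avuh}-\frac{h^\eps}{2}\jump{\vrh}\jump{|\avuh|^2}=h^\eps\{\vrh\}\jump{\avuh}^2\ge0$, using arithmetic means. This leaves an extra nonnegative dissipation that I must either show vanishes or, more likely, absorb into the identity by adjusting the form of the flux.

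Because the momentum test functions live on the dual grids while $\overline{\Phi_h}$ is a cell average, I would also check that $\jump{\overline{\Phi_h}}$ on primal faces is consistent with this. That consistency check is where I expect the bookkeeping to be hardest.
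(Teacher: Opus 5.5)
Your route is the paper's route. Positivity and \eqref{rce} come from the upwind structure (the paper simply quotes them from the literature). \eqref{Ener_Sta} is obtained by exactly your combination: \eqref{rce} with $B=\Hc$, \eqref{MAC_SM} with $\Phi_h=\vuh$, and \eqref{MAC_SD} with $\phi_h=\pm\frac12|\avuh|^2$.

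\textbf{The energy identity.} Where you got stuck is not a bookkeeping problem on your side; your computation of the $h^\eps$ contributions is correct. Writing $\{f\}:=\frac12(f^{\rm in}+f^{\rm out})$, the exact kinetic balance is
\begin{align*}
\intO{D_t\vmh\cdot\avuh}-\intE{\Fup[\vmh,\vuh]\cdot\jump{\avuh}}
&= D_t\intO{\tfrac12\vrh|\avuh|^2}+\tfrac{\TS}{2}\intO{\vrh(t-\TS)|D_t\avuh|^2}\\
&\quad+\tfrac12\intE{\vrh^{\rm up}|u_\sigma|\,|\jump{\avuh}|^2}+h^\eps\intE{\{\vrh\}\,|\jump{\avuh}|^2}.
\end{align*}
\begin{itemize}
\item The last term is not zero in general: since $\vrh>0$, it vanishes only for constant $\avuh$.
\item You may not ``adjust the flux'', because that changes the scheme \eqref{MAC_SM}.
\item Hence \eqref{Ener_Sta} as displayed does not hold as an equality. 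The correct identity carries the extra term $-h^\eps\intE{\{\vrh\}|\jump{\avuh}|^2}$ on its right-hand side. This is the numerical dissipation generated by $h^\eps\jump{\vmh}$ in the momentum flux, and the paper's one-line proof does not account for it.
\item The freedom in $\xi,\varsigma$ cannot absorb it: for a step with $\vrh$ constant in space and time, all $\Hc''$ terms vanish.
\end{itemize}
The right way to finish is to state this corrected identity, or \eqref{Ener_Sta} with ``$\le$''. Since the extra term is nonpositive, everything used later survives: \eqref{sta2}, the bound on $h^\alpha\|\GradB\vuh\|^2_{L^2L^2}$ behind \eqref{unib2}, and \eqref{RE1}. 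Your dual-grid worry is void: with $\Phi_h=\vuh$ one has $\overline{\Phi_h}=\avuh\in\vQh$, so every flux term lives on primal faces.

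\textbf{Positivity.} The row sums of your matrix are $|K|(1/\TS+\Divh\vuh|_K)$, which may be negative since no CFL condition is imposed. Correspondingly, the minimum-cell argument only gives $m(1+\TS\Divh\vuh|_K)\ge\vrh^K(t-\TS)>0$, which does not exclude $m<0$.

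Nonnegativity has to come from the columns instead. With $u_{\sigma,K}=-u_{\sigma,L}$ the outward normal velocities, one has $-(u_{\sigma,K})^-=(u_{\sigma,L})^+$. So each column sums to $|L|/\TS>0$, the transpose is a strictly row diagonally dominant Z-matrix, and the matrix is a nonsingular M-matrix with nonnegative inverse. Equivalently, sum the cell equations over $\{\vrh<0\}$, whose outgoing fluxes are all $\le 0$. Your minimum argument with $m=0$ then yields strict positivity.

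\textbf{Intermediate points in \eqref{rce}.} The two points merge by the intermediate value theorem. The weighted average $\bigl(h^\eps B''(\varsigma_1)+\frac12|u_\sigma|B''(\varsigma_2)\bigr)/\bigl(h^\eps+\frac12|u_\sigma|\bigr)$ lies in the range of the continuous function $B''$ on $co\{\vrh^{\rm in},\vrh^{\rm out}\}$, so it equals $B''(\varsigma)$ for a single $\varsigma$ in that interval.
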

\begin{proof}
\begin{itemize}
    \item The positivity of the density and the renormalized form of the discrete continuity equation follow from the upwind structure of \eqref{MAC_SD}; see \cite[Lemmas 8.2 and 8.3]{FLMS_book}.
    \item 
We achieve energy stability by combining \eqref{MAC_SD}, \eqref{MAC_SM} and the renormalized continuity equation \eqref{rce} with $\phi_h = -\frac{1}{2}|\overline{\vuh}|^2$, $\Phi_h= \vuh$, and $B(\vrh) =\mathcal{H}(\vrh)$. Using the identity $\vr\mathcal H'(\vr)-\mathcal H(\vr)=p(\vr)$, we obtain \eqref{Ener_Sta}.
\end{itemize}
\end{proof}

By adapting the topological degree argument in~\cite[Lemma 11.3]{FLMS_book}, we obtain the existence of a numerical solution to the present MAC scheme.
\begin{lemma}[\bf Existence of a numerical solution]\label{lem:existence_MAC}
Assume that the initial density $0<\underline{\vr}\le \vr_0 \quad \mbox{in }\Omega$.
Let $\vr_h^0=\Pi_Q\vr_0$. Then, for any $k=1,\dots,N_T$, $t^{N_T}=T$, there exists at least one solution
$(\vr_h^k,\vu_h^k)\in\Qh\times\vWh$ to the MAC scheme
\eqref{MAC_S}.
\end{lemma}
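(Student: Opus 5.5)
We argue by induction on the time level $k$. Assume $(\vr_h^{k-1},\vu_h^{k-1})$ is known with $\vr_h^{k-1}>0$; for $k=1$ this holds because $\vr_h^0=\Piq\vr_0\ge\underline{\vr}>0$. Then \eqref{MAC_S} at level $k$ is a finite-dimensional nonlinear system for $(\vr_h^k,\vu_h^k)\in\Qh\times\vWh$. We solve it with a topological degree argument, as in \cite[Lemma 11.3]{FLMS_book}. The only structural change is the additional Navier--Stokes-type term $h^\alpha\intO{\GradB\vuh:\GradB\Phi_h}$, which is linear and monotone. It therefore helps the coercivity estimates rather than obstructing them.

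\textbf{Homotopy.} For $\lambda\in[0,1]$ we consider the problem of finding $(\vrh,\vuh)$ in the open bounded set
\[
\mathcal{O}_\delta=\Big\{(\vrh,\vuh)\in\Qh\times\vWh:\ \delta<\vrh<\delta^{-1},\ \|\vuh\|_{L^\infty}<\delta^{-1}\Big\}
\]
that solves the scheme with all nonlinear terms multiplied by $\lambda$:
\begin{align*}
&\intO{\frac{\vrh-\vr_h^{k-1}}{\TS}\,\phi_h}-\lambda\intE{\Fup[\vrh,\vuh]\jump{\phi_h}}=0,\\
&\intO{\frac{\vrh\overline{\vuh}-\lambda\vmh^{k-1}}{\TS}\cdot\overline{\Phi_h}}-\lambda\intE{\Fup[\vmh,\vuh]\cdot\jump{\overline{\Phi_h}}}-\lambda\intO{p(\vrh)\Divh\Phi_h}+h^\alpha\intO{\GradB\vuh:\GradB\Phi_h}=0 .
\end{align*}
An equivalent variant keeps $\vrh$ in the time-derivative term and homotopes $\vr_h^{k-1}$ towards a constant.

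At $\lambda=0$ the system decouples. The density equation gives $\vrh=\vr_h^{k-1}$. The velocity equation becomes a linear system whose matrix is the weighted mass term $\intO{\vrh\overline{\vuh}\cdot\overline{\Phi_h}}/\TS$ plus the diffusion term. This matrix is symmetric positive definite for positive $\vrh$, so the system has a unique solution. Consequently the degree of the associated mapping is nonzero (it equals $\pm1$).

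\textbf{A priori bounds (main step).} We must show that for $\delta$ small enough, no solution with $\lambda\in[0,1]$ lies on $\partial\mathcal{O}_\delta$.

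\emph{Positivity and mass.} The upwind structure of the continuity equation with artificial diffusion $\ha\jump{\cdot}$ (with $\eps>-1$), as in Lemma~\ref{MAC_sta}, gives $\vrh>0$. Testing with $\phi_h\equiv1$ conserves mass. Together these bound $\vrh$ from above by $C(h)$ in terms of $\intO{\vr_h^{k-1}}$.

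\emph{Energy.} The energy identity \eqref{Ener_Sta}, reproduced for the $\lambda$-problem, bounds $\intO{\vrh|\overline{\vuh}|^2}$ and $h^\alpha\intO{|\GradB\vuh|^2}$ by the previous energy. Because $\Omega$ is a torus, the gradient bound controls $\vuh$ only up to constants. To control the mean, we use the lower bound on the density together with the kinetic-energy bound and equivalence of norms in finite dimensions. Alternatively, the bound on $\vrh|\overline{\vuh}|^2$ at points where $\vrh$ is bounded below, combined with the $\GradB$ bound, controls $\vuh$ everywhere on the fixed mesh.

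\emph{Strict lower bound on the density.} This is the delicate step; we expect it to be the main obstacle. We take the cell $K$ where $\vrh$ attains its minimum. From the discrete continuity equation on $K$ we get
\[
\vrh|_K\Big(1+\TS\,C(h,\|\vuh\|_{L^\infty})\Big)\ge\vr_h^{k-1}|_K ,
\]
because the outflow is proportional to $\vrh|_K$ while the diffusion and inflow contributions are nonnegative at a minimum. Since $\|\vuh\|_{L^\infty}$ has already been bounded in terms of $h$, this yields $\vrh\ge c(h,\TS,\vr_h^{k-1})>0$ uniformly in $\lambda$.

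The subtlety is ordering: the velocity bound must be obtained independently of the lower density bound. If that fails, we would first obtain $\vuh$ from the $h^\alpha$-diffusion bound plus an $L^1$ bound on the momentum, using mass conservation, and only then derive the density bound.

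\textbf{Conclusion.} With these $\lambda$-uniform bounds we choose $\delta$ so small that all solutions lie strictly inside $\mathcal{O}_\delta$. Homotopy invariance of the Brouwer degree then transfers the nonzero degree from $\lambda=0$ to $\lambda=1$. This gives a solution $(\vr_h^k,\vu_h^k)$ with $\vr_h^k>0$, which closes the induction.
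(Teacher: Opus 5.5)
Your proposal is correct and follows the same route as the paper: a Brouwer-degree homotopy as in the cited Lemma 11.3, where the $\lambda=0$ problem is linear and coercive on $\vWh$, and $\lambda$-uniform a priori bounds come from the energy estimate together with positivity of the density. Your version is more explicit than the paper's sketch, and two points should be kept in the final write-up: the velocity mean on the torus must be fixed through mass conservation (your second option, since using the lower density bound there would be circular), and the quantitative lower bound $\vrh|_K\bigl(1+\TS\,C(h,\|\vuh\|_{L^\infty})\bigr)\ge \vr_h^{k-1}|_K$ is what actually keeps homotopy solutions off $\{\vrh=\delta\}$, a step the paper leaves implicit in the reference.
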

\begin{proof}
The proof follows the topological degree argument in
\cite[Lemma 11.3]{FLMS_book}. In the present MAC setting, the only changes
are that the velocity belongs to the staggered space $\vWh$, the momentum is
the cell-centered quantity $\vm_h=\vr_h\overline{\vuh}$, and the viscous
regularization is replaced by the artificial diffusion term
$h^\alpha(\GradB\vuh,\GradB\Phi_h)$.

For the homotopy used in \cite[Lemma 11.3]{FLMS_book}, the case $\zeta=0$
reduces to a linear coercive problem on $\vWh$. The a priori bound required
for the degree argument is provided by the stability estimate in
Lemma~\ref{MAC_sta}, while the positivity of the density follows from the
same lemma. Hence no homotopy solution reaches the boundary of the
admissible set. The homotopy invariance of the topological degree gives a
solution for $\zeta=1$. Induction over the time levels completes the proof.
\end{proof}

Combining the energy inequality~\eqref{Ener_Sta}, the Sobolev--Poincar\'e inequality~\cite[Theorem~17]{FLMS_book}, and the negative norm estimates established in~\cite[Lemma~4.4]{LMSY} and~\cite[Corollary~11.1]{FLMS_book}, we obtain the following corollary.
\begin{corollary}\label{cor_unib}
$\mathbf{(Uniform\ bounds\ and\ negative\ estimates).}$
Let $(\vrh, \vuh)$ be a solution of the MAC scheme~\eqref{MAC_S}. Then there exists $c>0$, depending on the initial energy $E_0$ but independent of the discretization parameters $\TS$ and $h$, such that
\begin{subequations}\label{unib}
\begin{equation}\label{unib1}
\norm{\vrh|\overline{\vuh}|^2}_{L^\infty L^1} \leq c,\qquad
\norm{\vrh}_{L^\infty L^\gamma}\leq c,\qquad
\norm{\vmh}_{L^\infty L^{\frac{2\gamma}{\gamma+1}}}\leq c,
\end{equation}
\begin{equation}\label{unib2}
\norm{\vuh}_{L^2 L^6}\leq\norm{\GradB\vuh}_{L^2 L^2} \leq c h^{-{\alpha}/{2}}.
\end{equation}
\begin{equation}\label{nedm1}
\begin{aligned}
\norm{\vrh}_{L^2 L^2}\aleq h^{\beta_D},\qquad
\beta_D =
\begin{cases}
\min\limits_{p\in\left[1,\infty\right)}\{\frac{p(\eps+1)+4}{2p}, 1\}\cdot \frac{\gamma-2}{\gamma}
& \mbox{ if }d=2,\ \gamma \in(1,2), \\
\min\{\frac{\eps+2}{3}, 1\}\cdot\frac{3(\gamma-2)}{2\gamma}
&\mbox{ if }d=3,\ \gamma\in(1,2), \\
0 &\mbox{ if } \gamma \geq 2;
\end{cases}
\end{aligned}
\end{equation}
\begin{equation}\label{nedm3}
\begin{aligned}
\norm{\vrh\overline{\vuh}}_{L^2 L^2}\aleq h^{\beta_M},\qquad
\beta_M = \max\{\beta_{M_1}, -\frac{3\eps+3+d}{3\gamma}\},
\end{aligned}
\end{equation}
where
\begin{equation*}
\begin{aligned}
\beta_{M_1}=
\begin{cases}
\max\limits_{p\in\left[\frac{2\gamma}{\gamma-1},\infty\right)}\left\{ -\frac{p(\eps+1)+4}{2p\gamma},\frac{p(\gamma-2)-2\gamma}{p\gamma}-\frac{\alpha}{2} \right\}
& \mbox{ if }d=2,\ \gamma \in\left(1,2\right], \\
-\frac{\alpha}{2} &\mbox{ if }d=2,\ \gamma > 2,\\
\max\left\{ -\frac{\eps+2}{2\gamma},\frac{\gamma-3}{\gamma}-\frac{\alpha}{2}, -\frac{3}{2\gamma} \right\}
&\mbox{ if }d=3,\ \gamma\in\left(1,2\right],\\
\frac{\gamma-3}{\gamma}-\frac{\alpha}{2} &\mbox{ if }d=3,\ \gamma \in (2,3), \\
-\frac{\alpha}{2} &\mbox{ if }d=3,\ \gamma \geq 3.
\end{cases}
\end{aligned}
\end{equation*}
\end{subequations}
\end{corollary}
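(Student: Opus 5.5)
The starting point is the energy identity \eqref{Ener_Sta} of Lemma~\ref{MAC_sta}. All terms on its right-hand side are nonpositive: $\mathcal H''>0$, $h^\eps>0$, and positivity of $\vrh$ holds. Multiplying by $\TS$ and summing over the time levels up to $t^n$ therefore gives
\[
\intO{\Big(\tfrac12\vrh|\avuh|^2+\Hc(\vrh)\Big)(t^n)} + h^\alpha\int_0^{t^n}\!\!\intO{|\GradB\vuh|^2}\dt + \mathcal D_n \le E_0,
\]
where $\mathcal D_n\ge 0$ collects the numerical dissipation. From this, \eqref{unib1} follows directly. The first two bounds are read off, since $\Hc(\vrh)=\vrh^\gamma/(\gamma-1)$. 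For the third, write
\[
\vmh=\sqrt{\vrh}\,\sqrt{\vrh}\,\avuh
\]
and apply H\"older with exponents $2\gamma$ and $2$, giving $\|\vmh\|_{L^{2\gamma/(\gamma+1)}}\le \|\vrh\|_{L^\gamma}^{1/2}\|\vrh|\avuh|^2\|_{L^1}^{1/2}$. The same summation also yields $h^\alpha\|\GradB\vuh\|_{L^2L^2}^2\le E_0$, which is the second inequality in \eqref{unib2}.

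The first inequality in \eqref{unib2} would come from the discrete Sobolev--Poincar\'e inequality \cite[Theorem~17]{FLMS_book} applied componentwise on the dual grids. Here the zero-mean issue is the delicate point. I would handle it as in \cite{FLMS_book}, by controlling the mean of $\vuh$ through mass and energy: $\int\vrh$ is conserved and $\int\vrh|\avuh|^2$ is bounded, so a generalized Poincar\'e inequality with weight $\vrh$ applies. The bound is then $\|\vuh\|_{L^6}\aleq \|\GradB\vuh\|_{L^2}+c$, and the constant is harmless (absorbed, or stated as written, since $h^{-\alpha/2}\ge1$).

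For the negative estimates \eqref{nedm1}--\eqref{nedm3}, the plan is to use a local $L^\infty$ bound coming from the dissipation. From $\mathcal D_n$ with $B=\Hc$ we get
\[
h^\eps\int_0^T\intE{\Hc''(\varsigma)\jump{\vrh}^2}\dt\le E_0 .
\]
For $\gamma\ge2$, the bound $\|\vrh\|_{L^2}\aleq\|\vrh\|_{L^\gamma}$ is trivial, so $\beta_D=0$. For $\gamma<2$, I would follow \cite[Lemma~4.4]{LMSY}. The jump dissipation together with the inverse inequality $\|r_h\|_{L^p}\aleq h^{d/p-d/\gamma}\|r_h\|_{L^\gamma}$ and interpolation between $L^\gamma$ and the gradient-type control gives a negative power of $h$. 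Optimizing over the free exponent $p$ produces the stated $\beta_D$, with the $\min\{\cdot,1\}$ reflecting whether the jump control or the plain inverse estimate is sharper.

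For $\vrh\avuh$ there are two routes, and I would take the better one.
\begin{itemize}
\item The first route uses H\"older with $\|\vrh\|_{L^q}$ and $\|\avuh\|_{L^6}$ (or $L^{p}$ in $d=2$), with $\|\vuh\|_{L^2L^6}\aleq h^{-\alpha/2}$ from \eqref{unib2}. This gives the branches with $-\alpha/2$ in $\beta_{M_1}$.
\item The second route is $\vrh|\avuh|=\sqrt{\vrh}\sqrt{\vrh|\avuh|^2}$ with inverse estimates, which gives the $-(3\eps+3+d)/(3\gamma)$ alternative as in \cite[Corollary~11.1]{FLMS_book}.
\end{itemize}

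I expect the main obstacle to be the exponent bookkeeping in the case split $d\in\{2,3\}$ with $\gamma$ relative to $2$ and $3$. This includes choosing $p\ge 2\gamma/(\gamma-1)$ so that the H\"older pairing closes, and checking that the averaging $\avuh$ and the dual-grid Sobolev inequality transfer to primal cells without losing powers of $h$.
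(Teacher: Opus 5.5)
Your proposal is correct and follows essentially the same route as the paper. The paper's proof consists only of combining the summed energy identity \eqref{Ener_Sta}, the discrete Sobolev--Poincar\'e inequality \cite[Theorem~17]{FLMS_book}, and the negative-norm estimates of \cite[Lemma~4.4]{LMSY} and \cite[Corollary~11.1]{FLMS_book}.

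Your control of the mean of $\vuh$ through mass conservation and the bound on $\int \vrh|\avuh|^2$ is a genuinely needed refinement. The first inequality in \eqref{unib2} fails literally for constant velocities, and only $\norm{\vuh}_{L^2L^6}\aleq h^{-\alpha/2}$ is used later. One bookkeeping correction: the $\alpha$-free candidates $-\frac{\eps+2}{2\gamma}$, $-\frac{3}{2\gamma}$ and $-\frac{p(\eps+1)+4}{2p\gamma}$ in $\beta_{M_1}$ come from your second route $\vrh|\avuh|=\sqrt{\vrh}\sqrt{\vrh|\avuh|^2}$, combined with the $L^\infty$ bounds on $\vrh$ obtained in the $\beta_D$ step, not from the first route.
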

Here $ \aleq $ indicates that one quantity is less than or equal to another, up to a positive constant factor. 
\begin{remark}\label{va_betaM}
For the reader's convenience, we list below sufficient conditions ensuring $ \beta_D\ge\beta_M>-1+\frac{\alpha}{2}.$

\begin{itemize}
  \item \textbf{Case $d=2$.}
  \begin{enumerate}[label=(\roman*)]
\item If $\gamma \in (1,2]$, then
\begin{equation*}
\begin{aligned}
\begin{cases}
\varepsilon>-1,
&
0<\alpha<2-\dfrac{2}{\gamma},
\\ 
-1<\varepsilon<\gamma(2-\alpha)-1,
&
2-\dfrac{2}{\gamma}\le \alpha<2.
\end{cases}
\end{aligned}
\end{equation*}
\item If $\gamma > 2$, then
\begin{equation*}
\begin{aligned}
\begin{cases}
\varepsilon>-1,
&
0<\alpha<1,
\\ 
-1<\varepsilon<
\gamma\left(1-\dfrac{\alpha}{2}\right)-\dfrac53,
&
1\le \alpha<2-\dfrac{4}{3\gamma}.
\end{cases}
\end{aligned}
\end{equation*}
\end{enumerate}
    
  \item \textbf{Case $d=3$.}
 \begin{enumerate}[label=(\roman*)]
\item If $\gamma \in (1,2]$, then
      \begin{equation*}
\begin{aligned}
\begin{cases}
\varepsilon>-1,
&
0<\alpha<2-\dfrac{3}{\gamma},
\\
-1<\varepsilon<\gamma(2-\alpha)-2,
&
\max\left\{0,2-\dfrac{3}{\gamma}\right\}
\le \alpha<2-\dfrac1\gamma.
\end{cases}
\end{aligned}
\end{equation*}
      \item If $\gamma \in (2,3)$, then
      \begin{equation*}
\begin{aligned}
\begin{cases}
\varepsilon>-1,
&
0<\alpha<2-\dfrac{3}{\gamma},
\\
-1<\varepsilon<
\gamma\left(1-\dfrac{\alpha}{2}\right)-2,
&
2-\dfrac{3}{\gamma}\le \alpha<2-\dfrac{2}{\gamma}.
\end{cases}
\end{aligned}
\end{equation*}
      \item If $\gamma \ge 3$, and 
      \begin{equation*}
\begin{aligned}
\begin{cases}
\varepsilon>-1,
&
0<\alpha<1,
\\
-1<\varepsilon<
\gamma\left(1-\dfrac{\alpha}{2}\right)-2,
&
1\le \alpha<2-\dfrac{2}{\gamma}.
\end{cases}
\end{aligned}
\end{equation*}
\end{enumerate}
\end{itemize}
\end{remark}

With the above estimates, we present the consistency of the scheme.
\begin{lemma}[\bf Consistency of the MAC scheme]\label{MAC_CS}
Let $(\vrh, \vuh)$ be a solution of the MAC scheme \eqref{MAC_S}  with $\Delta t \approx h \in(0,1)$, $\gamma>1$.
In addition, assume that $\varepsilon$ and $\alpha$ satisfy the conditions 
  of Remark~\ref{va_betaM}. 
Then, for any $\tau\in(0,T),\ \varphi \in C_c^2([0, T) \times \Omega)$ and $\vvarphi \in C_c^2([0, T) \times \Omega; \mathbb{R}^d)$ there hold
\begin{subequations}\label{CS}
\begin{equation} \label{cs1}
\begin{aligned}
\left[\intO{ \vrh \varphi }\right]_{t=0}^{\tau}
 &= \int_0^{\tau} \intO{\left( \vrh\partial_t\varphi + \vmh \cdot \Grad\varphi \right)} \dt + R_{\vr} ({\tau},\Delta t,h, \varphi),
\end{aligned}
\end{equation}
\begin{equation}\label{cs2}
\begin{aligned}
\left[\intO{ \vmh \cdot \vvarphi}\right]_{t=0}^{\tau}&= \int_0^{\tau}\intO{\left( \vmh \cdot \partial_t \vvarphi + \frac{\vmh\otimes\vmh}{\vrh}:\Grad\vvarphi+ p(\vrh) \nabla\cdot\vvarphi\right)} \dt \\
&+R_{\vm} ({\tau},\Delta t,h, \vvarphi),  
\end{aligned}
\end{equation}
\end{subequations}
where the consistency errors are bounded as follows:
\begin{equation*}
\begin{aligned}
&\abs{R_{\vr} ({\tau},\Delta t,h, \varphi)} \aleq \Delta t+h^{\eps+1} +h  +h^{1+\beta_D-{\alpha}/{2}},\\
&\abs{R_{\vm} ({\tau},\Delta t,h, \vvarphi)} \aleq \Delta t+h^{\eps+1}+h +h^{1+\beta_M-{\alpha}/{2}}+h^{{\alpha}/{2}}.
\end{aligned}
\end{equation*}
\end{lemma}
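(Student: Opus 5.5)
We follow the standard consistency argument for upwind-type finite volume schemes in \cite[Chapter~11]{FLMS_book}, adapted to the staggered MAC setting.

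\textbf{Continuity equation.} We take $\phi_h=\Piq\varphi(t)$ in \eqref{MAC_SD} and sum over time levels, with $\tau$ rounded to a multiple of $\TS$ at the cost of $O(\TS)$. Summation by parts in time replaces $\int D_t\vrh\,\Piq\varphi$ by $-\int\vrh\,D_t\varphi$ plus boundary terms. Since $\varphi\in C^2$ and $\norm{\vrh}_{L^\infty L^1}$ is bounded, this yields $\int\vrh\pd_t\varphi$ up to $O(\TS)$. For the flux term we split $\Fup=\Up-h^\eps\jump{\vrh}$.

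The diffusion part gives $h^\eps\int_\faces\jump{\vrh}\jump{\Piq\varphi}$. Using $\abs{\jump{\Piq\varphi}}\aleq h$, $|\faces|\,h\approx 1$ and the $L^1$ bound of $\vrh$, it is $O(h^{\eps+1})$. For the upwind part we use the identity
\[
\Up[r,\vuh]=\vrh^{\rm up}u_\sigma=\tfrac12(r^{\rm in}+r^{\rm out})u_\sigma-\tfrac12|u_\sigma|\jump{r}.
\]
The centered part is rewritten cellwise as $\int\vrh\vuh\cdot\Grad\varphi$, with the face velocity in place of $\avuh$, up to an error controlled by $h\norm{\vrh\vuh}_{L^1}$. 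Replacing $u_{i,h}$ by $\overline{u_{i,h}}$ costs $\int\abs{\vrh}\,h\abs{\GradB\vuh}$, which we bound by
\[
h\norm{\vrh}_{L^2L^2}\norm{\GradB\vuh}_{L^2L^2}\aleq h^{1+\beta_D-\alpha/2}
\]
using \eqref{nedm1} and \eqref{unib2}. The upwind dissipation $\tfrac12|u_\sigma|\jump{\vrh}\jump{\Piq\varphi}$ is handled in the same way: $\jump{\vrh}$ is bounded by $\vrh$ on the two neighbouring cells and $\jump{\Piq\varphi}=O(h)$, so we need $\int\vrh|\vuh|h$. A first crude bound of this is $h\norm{\vrh}_{L^2L^2}\norm{\vuh}_{L^2L^2}\aleq h^{1+\beta_D-\alpha/2}$ via \eqref{unib2}; the energy dissipation in \eqref{Ener_Sta} would give a sharper bound. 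Collecting the terms gives the stated bound on $R_\vr$.

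\textbf{Momentum equation.} We test \eqref{MAC_SM} with $\Phi_h=\Piv\vvarphi$, the face projection onto $\vWh$. The time term and the $h^\eps$ diffusion are treated as before, with $\vmh$ in place of $\vrh$, using $\overline{\Piv\vvarphi}=\vvarphi+O(h)$. For the pressure we use $\Divh\Piv\vvarphi=\Piq\Div\vvarphi$ exactly, so the pressure term is consistent up to $O(h)\norm{p(\vrh)}_{L^1}$. For the artificial viscosity,
\[
h^\alpha\int\GradB\vuh:\GradB\Piv\vvarphi\le h^{\alpha}\norm{\GradB\vuh}_{L^2L^2}\norm{\Grad\vvarphi}\aleq h^{\alpha/2}.
\]
The convective term $\int_\faces\Up[\vmh,\vuh]\cdot\jump{\overline{\Piv\vvarphi}}$ is decomposed as in the continuity case into a centered part and an upwind jump part. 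The centered part is identified with $\int(\vmh\otimes\vuh):\Grad\vvarphi$, and replacing $\vuh$ by $\avuh=\vmh/\vrh$ produces $\frac{\vmh\otimes\vmh}{\vrh}$. The error terms are of the form $h\int\abs{\vmh}\abs{\GradB\vuh}$, bounded by
\[
h\norm{\vrh\avuh}_{L^2L^2}\norm{\GradB\vuh}_{L^2L^2}\aleq h^{1+\beta_M-\alpha/2}
\]
via \eqref{nedm3}.

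\textbf{Main obstacle.} The delicate step is the convective term of the momentum equation. Its flux is built from the cell-centered $\vmh$ while the test function and the velocity live on faces and are averaged. Every commutator between $\overline{\cdot}$, $\jump{\cdot}$ and the product must be reduced to a form $h\cdot(\text{density- or momentum-weighted } L^2)\cdot\norm{\GradB\vuh}_{L^2}$. We will first try to prove such a discrete product rule and averaging estimate, $\abs{\overline{fg}-\bar f\bar g}\aleq h^2\abs{\GradB f}\abs{\GradB g}$ in the form needed, and absorb the upwind term through the same bound on $\int\vrh|\vuh|^2h$. The conditions of Remark~\ref{va_betaM}, which give $\beta_D\ge\beta_M>-1+\alpha/2$, guarantee that all resulting powers of $h$ are positive.
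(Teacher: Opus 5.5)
\textbf{There is a genuine gap in your treatment of the jump terms.} The rest of your outline matches the paper's proof: testing with $\Piq\varphi$ and $\Piv\vvarphi$, $O(\TS)$ time errors, the exact pressure identity, the $h^{\alpha/2}$ viscous bound, and the replacement $\vuh\to\avuh$ at cost $h\norm{\vrh}_{L^2L^2}\norm{\GradB\vuh}_{L^2L^2}$, resp.\ $h\norm{\vrh\avuh}_{L^2L^2}\norm{\GradB\vuh}_{L^2L^2}$. The problem is the two face terms that contain a jump of the numerical density.

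A face integral of a cellwise function is of order $h^{-1}$ times its volume integral. On the uniform grid, $\intE{(\vrh^{\rm in}+\vrh^{\rm out})}=2d\,h^{-1}\norm{\vrh}_{L^1}$, which is exactly your $|\faces|h\approx 1$. Bounding $\abs{\jump{\vrh}}$ by the neighbouring values and using $\abs{\jump{\Piq\varphi}}\aleq h$ therefore only gives
\[
\ha\intE{\abs{\jump{\vrh}}\abs{\jump{\Piq\varphi}}}\aleq h^{\eps},
\qquad
\intE{\abs{u_\sigma}\abs{\jump{\vrh}}\abs{\jump{\Piq\varphi}}}\aleq\sumi\intO{\vrh\abs{\uih}},
\]
not $h^{\eps+1}$ and $h\intO{\vrh\abs{\vuh}}$.
\begin{itemize}
\item The first bound does not vanish for the admissible values $\eps\in(-1,0]$.
\item The second gives no decay at all. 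Your H\"older bound, with the correct power of $h$, is $\norm{\vrh}_{L^2L^2}\norm{\vuh}_{L^2L^2}\aleq h^{\beta_D-\alpha/2}$, which even blows up.
\item The same factor $h$ is lost in the momentum equation. There the upwind term is only bounded by $\intO{\abs{\vmh}\abs{\vuh}}$, so absorbing it via $h\int\vrh\abs{\vuh}^2$ fails too.
\end{itemize}

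\textbf{The missing step is a discrete integration by parts.} It moves the difference off the rough quantity $\vrh$ (resp.\ $\vrh\auih$) onto the test function and the velocity, before anything is estimated.

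For the numerical diffusion, $\ha\intE{\jump{\vrh}\jump{\Piq\varphi}}=-h^{\eps+1}\intO{\vrh\Delta_h\Piq\varphi}$. Since $\abs{\Delta_h\Piq\varphi}\aleq\norm{\varphi}_{C^2}$, this is $O(h^{\eps+1})$.

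For the upwind flux the paper uses \cite[Lemma~8.1]{FLMS_book}: $\intE{\Up[\vrh,\vuh]\jump{\Piq\varphi}}$ equals $\intO{\vrh\vuh\cdot\GradD(\Piq\varphi)}$ plus
\[
\frac h2\sumi\intO{\vrh\,\Delta_h^{(i)}(\Piq\varphi)\,\overline{\abs{\uih}}}
+\frac h2\sumi\intO{\vrh\,\overline{\pdduali(\Piq\varphi)}\,\pdmeshi\abs{\uih}} .
\]
The explicit factor $h$ now survives, because both differences fall on $\varphi$ or on $\abs{\uih}$. We have $\abs{\pdmeshi\abs{\uih}}\le\abs{\pdmeshi\uih}\le\abs{\GradB\vuh}$ and, by \eqref{unib2}, $\norm{\vuh}_{L^2L^2}\aleq h^{-\alpha/2}$. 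Hence both terms are $\aleq h\norm{\vrh}_{L^2L^2}h^{-\alpha/2}\aleq h^{1+\beta_D-\alpha/2}$.

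The same identity with $r_h=\vrh\auih$ and $\Piq\Pivi\varphi_i=\overline{\Pivi\varphi_i}$ gives the $h^{1+\beta_M-\alpha/2}$ terms in the momentum equation. This is exactly where the weak-BV bound enters the upwind part. The averaging estimate you single out as the main tool does not address this term, and the energy-dissipation route you mention is neither carried out nor needed.
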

The proof of this lemma is based on H\"older's inequality, Young's inequality, and the standard interpolation error estimates, see analogous result of Navier--Stokes equation~\cite{FLS_IEE}. 
For better readability, we provide a sketch of the proof in Appendix~\ref{App1}.
\begin{remark}
This lemma establishes the unconditional consistency of the MAC scheme. Without the weak BV estimate~\eqref{unib2} obtained from the artificial diffusion, consistency can only be proved under additional a priori assumptions, see~\cite{AAM2AN}.
\end{remark}

Now we can show the convergence of our scheme.
\begin{theorem}[\bf Convergence of MAC scheme]\label{theo_Con}
    Let $(\vrh, \vuh)$ be a numerical solution of the MAC scheme \eqref{MAC_S} with $\Delta t \approx h \in(0,1)$, $\gamma>1$. Assume further that the scheme parameters $(\varepsilon,\alpha)$ satisfy the requirements of Remark \ref{va_betaM}.

\begin{itemize}
 
\item There exists a subsequence (not relabelled), such that
\begin{equation*}
\begin{aligned}
&\vr_h \to \vr \quad \text{weakly-(*) in } L^\infty(0,T;L^\gamma(\Omega)),
\\
&\vm_h \to \vm \quad \text{weakly-(*) in } L^\infty\!\left(0,T;L^{\frac{2\gamma}{\gamma+1}}(\Omega;\mathbb{R}^d)\right),
\end{aligned}
  \end{equation*}
  where $(\vr,\vm)$ is a DW solution of the barotropic Euler system in the sense of Definition~\ref{defi_DWs};
  
  \item
Let $(\vr,\vu)$ be a strong solution of \eqref{PDE}--\eqref{BC} on $[0,T]$ in the sense of Definition~\ref{defi_Cs}.
Then, for any $1\le q<\infty$,
\begin{equation*}\label{scon}
\begin{aligned}
&\vr_h \to \vr \ \text{in } L^q(0,T;L^\gamma(\Omega)),
\\
&\vm_h \to \vr \vu \ \text{in }
L^q\!\left(0,T;L^{\frac{2\gamma}{\gamma+1}}(\Omega;\mathbb{R}^d)\right).
\end{aligned}
\end{equation*}

\end{itemize}    
\end{theorem}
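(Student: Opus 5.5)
The plan is to verify that the MAC solutions form a consistent approximation in the sense of Definition~\ref{defi_CA}, and then to invoke the generalized Lax equivalence theorem of \cite{FLMS_book}. That theorem yields weak-(*) limits that are dissipative weak solutions, and it upgrades the convergence to strong convergence whenever a strong solution exists, via the weak--strong uniqueness principle for dissipative weak solutions.

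\textbf{Consistent approximation.} First the initial data. Since $\vrh^0=\Piq\vr_0$ and $\vmh^0=\Piq\vm_0$, they converge strongly, hence weakly, in $L^1$. By convexity of $(\vr,\vm)\mapsto \frac{|\vm|^2}{2\vr}+\Hc(\vr)$ and Jensen's inequality on each cell, the discrete initial energy is bounded by the continuous one. It also converges to the continuous energy, which is needed for the strong convergence part.

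Next the stability conditions. Positivity \eqref{sta1} comes from Lemma~\ref{MAC_sta}. Energy stability \eqref{sta2} comes from \eqref{Ener_Sta}: every term on its right-hand side is nonpositive, since $\Hc''>0$, $\vrh>0$ and $\vrh^{\rm up}>0$. Summing over time levels therefore gives \eqref{sta2}, with the identity $\frac12\vrh|\avuh|^2=\frac{|\vmh|^2}{2\vrh}$. The numerical solutions are piecewise constant in time, so the bound holds for all $t$.

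Finally the consistency conditions. These are exactly Lemma~\ref{MAC_CS}. Under the parameter restrictions of Remark~\ref{va_betaM} we have $\beta_D\ge\beta_M>-1+\alpha/2$, so the exponents $1+\beta_D-\alpha/2$ and $1+\beta_M-\alpha/2$ are positive. The exponents $\eps+1$ and $\alpha/2$ are positive as well, since $\eps>-1$ and $\alpha>0$. Hence $R_\vr,R_\vm\to0$ as $h\to0$ with $\Delta t\approx h$.

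\textbf{First claim.} The uniform bounds \eqref{unib1} of Corollary~\ref{cor_unib} and the Banach--Alaoglu theorem give a subsequence with $\vrh\to\vr$ weakly-(*) in $L^\infty L^\gamma$ and $\vmh\to\vm$ weakly-(*) in $L^\infty L^{2\gamma/(\gamma+1)}$. The convex nonlinearities $\frac{|\vmh|^2}{\vrh}$, $\frac{\vmh\otimes\vmh}{\vrh}$ and $p(\vrh)$ are bounded in $L^\infty(0,T;L^1)$, so they converge weakly-(*) in $L^\infty(0,T;\mathcal M)$. The Reynolds defect $\frakR$ is defined as the difference between the weak limit of the flux and the flux evaluated at the limits, and likewise the energy defect $\frakE$. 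Both are nonnegative by convexity. Passing to the limit in \eqref{cs1}--\eqref{cs2} and in \eqref{sta2} yields \eqref{dw3}, \eqref{dw4} and \eqref{ener_ieq}. The compatibility \eqref{dw5} follows from comparing traces, since $\mathrm{tr}\,\frac{\vm\otimes\vm}{\vr}=2\cdot\frac{|\vm|^2}{2\vr}$ and $d\,p=d(\gamma-1)\Hc$. Weak continuity in time follows from the equations together with the uniform bounds. These are the standard steps in \cite[Chapter~5]{FLMS_book}, so I would cite that result rather than repeat it.

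\textbf{Second claim.} I would invoke the weak--strong uniqueness for DW solutions with strong solutions of Definition~\ref{defi_Cs} (relative energy method, \cite{FLMS_book}). It gives $(\vr,\vm)=(\vr,\vr\vu)$ and $\frakE=\frakR=0$, and since the limit is unique, the whole sequence converges. The step I expect to be the main obstacle is upgrading weak to strong convergence. Because the defect vanishes, the energy of $(\vrh,\vmh)$ converges to the energy of the limit, $\int_0^T\intO{E(\vrh,\vmh)}\dt\to\int_0^T\intO{E(\vr,\vm)}\dt$. Strict convexity of $E$ in $(\vr,\vm)$, through Young measures or a relative-energy argument, then gives convergence in measure. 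Together with equi-integrability from the $L^\infty L^\gamma$ and $L^\infty L^{2\gamma/(\gamma+1)}$ bounds, this yields strong convergence in $L^1(0,T;L^r)$ for $r$ below the critical exponents. Convergence at the critical exponents $\gamma$ and $2\gamma/(\gamma+1)$ follows from convergence of the norms, via convergence of $\intO{\Hc(\vrh)}$, using uniform convexity of $L^p$. Finally, interpolation with the $L^\infty$-in-time bounds gives convergence in every $L^q$ in time, $q<\infty$.
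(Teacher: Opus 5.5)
Your proposal is correct and follows the paper's proof. You verify that the MAC solutions form a consistent approximation, taking stability from Lemma~\ref{MAC_sta} and Corollary~\ref{cor_unib} and consistency from Lemma~\ref{MAC_CS}, with the positive error exponents guaranteed by Remark~\ref{va_betaM}, and then you invoke the generalized Lax equivalence theorems (Theorems~5.3 and~5.4 of the cited monograph), exactly as the paper does, while also making explicit the Jensen bound on the discrete initial energy that the paper leaves implicit.
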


\begin{proof}
Lemma~\ref{MAC_sta} (together with Corollary~\ref{cor_unib}) yields the stability requirements in Definition~\ref{defi_CA},
while Lemma~\ref{MAC_CS} (under the parameter constraints of Remark~\ref{va_betaM}) provides the consistency conditions.
Hence $(\vrh,\vmh)$ is a consistent approximation. Therefore, by \cite[Theorems~5.3 and~5.4]{FLMS_book},
the numerical solutions converge to a dissipative weak solution; moreover, if a strong solution exists,
they converge to that strong solution.
\end{proof}

\begin{remark}
There is another general convergence theory by Lax and Wendroff in the context of a hyperbolic system of conservation laws. It states that if the solution of a conservative numerical scheme is bounded and converges strongly, then the limit is a weak solution. This theory has been extended to the Euler system by Herbin et al. \cite{HLMT}. 
Let us point out that our convergence result stated in the second part of Theorem \ref{theo_Con} only requires the existence of the strong solution. Without knowing the existence of a strong solution we can still claim the natural convergence towards a DW solution, see the first part of Theorem \ref{theo_Con}. Note that this theory does not need any assumption on the approximate sequence. Conversely, the Lax-Wendroff theory requires a priori both the boundedness and strong convergence of the approximate sequence, see  \cite{HLMT}. 
\end{remark}

\section{Error estimates}\label{error_es}
In this section, we study the convergence rate of the MAC scheme \eqref{MAC_S} following~\cite{FLS_IEE}. To this end, we introduce the relative energy functional
\begin{align*}
    \RE\{(\vrh,\vuh)|(\vr,\vu)\}= \intOB{\frac12 \vrh |\overline{\vuh}-\vu|^2 +\Hc(\vrh) - \Hc(\vr)-\Hc'(\vr)(\vrh-\vr)}
\end{align*}
that represents the distance between numerical solution $(\vrh,\vuh)$ and target solution $(\vr,\vu)$. 
\begin{theorem}[\bf Error estimates]\label{theo_EE}
Let the Euler system \eqref{PDE} - \eqref{BC} admit a strong solution $(\vr,\vu)$ in the class of $W^{2,\infty}((0,T)\times\Omega)$ with $\vr>0$ and $\gamma>1$. Let $(\vrh, \vuh)$ be a solution of the MAC scheme \eqref{MAC_S}. 
\begin{itemize}
\item 
Then
\begin{align}\label{ineq_error}
\sup_{0\leq t \leq T} \RE\{(\vrh,\vuh)|(\vr,\vu)\} \aleq  \TS + h^A, 
\end{align}
where the convergence rate $A$ reads
\begin{align}\label{uncon_e}
A=\min\{1, 1+\eps, \frac{\alpha}{2}, 1+\beta_D -\frac{\alpha}{2}, 1+\beta_M -\frac{\alpha}{2}\}.
\end{align}
Here, the constants $\beta_D$ and $\beta_M$ are given in Corollary \ref{cor_unib}.
Moreover,

\begin{align}\label{uncon_b1}
&\norm{\vrh-\vr}_{L^\gamma}+\norm{\vmh-\vm}_{L^\frac{2\gamma}{\gamma+1}}\aleq (\TS + h^A)^\frac{1}{\gamma} \mbox{ for }\gamma\leq 2,\notag \\
&\norm{\vrh-\vr}_{L^2}+\norm{\vmh-\vm}_{L^\frac{2\gamma}{\gamma+1}}\aleq(\TS + h^A)^\frac{1}{2} \mbox{ for }\gamma\geq 2.
\end{align}

\item
If the numerical solutions are bounded, i.e., there exist positive constants $\overline{\vr} $ and $ \overline{\vu}$ such that 
$\vrh < \overline{\vr} \mbox{ and } \abs{\vuh}<\overline{\vu} \mbox{ uniformly for } h \rightarrow 0$. Then estimate~\eqref{ineq_error} still holds, where the convergence rate $A$ reads
$$A=\min\{1, 1+\eps, 2-\alpha, \alpha\}.$$ 
Moreover,
\begin{align}\label{con_b}
\norm{\vrh-\vr}_{L^2}+\norm{\vmh-\vm}_{L^2}\aleq(\TS + h^A)^\frac{1}{2}.
\end{align}
\end{itemize}
\end{theorem}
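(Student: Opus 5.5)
The estimate will follow from a discrete relative energy inequality, closed by Gronwall, in the spirit of \cite{FLS_IEE}. The first step is to derive, for the numerical solution and the strong solution $(\vr,\vu)$ evaluated through projections ($\Piq\vr$ on $\Qh$ and the face projection $\Piv\vu\in\vWh$), the discrete relative energy inequality
\[
D_t\RE + h^\alpha\intO{|\GradB\vuh|^2} + \mathcal{D}_{num}\le \intO{(\ldots)} ,
\]
where $\mathcal{D}_{num}\ge 0$ collects the numerical dissipation terms on the right-hand side of \eqref{Ener_Sta}. To obtain it, I take the energy balance \eqref{Ener_Sta}, subtract the continuity equation \eqref{MAC_SD} tested with $\phi_h=\Piq\big(\tfrac12|\vu|^2-\Hc'(\vr)\big)$, and subtract the momentum equation \eqref{MAC_SM} tested with $\Phi_h=\Piv\vu$. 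The time differences of the test functions are handled by a discrete product rule. This yields the usual remainder terms, namely $\int\vrh(\avuh-\vu)\cdot\Grad\vu\cdot(\vu-\avuh)$ and $\int (p(\vrh)-p(\vr)-p'(\vr)(\vrh-\vr))\Div\vu$, each bounded by $c\RE$, together with consistency-type residuals. For the exact solution I use the strong form of \eqref{PDE}, which is justified by the $W^{2,\infty}$ regularity, to cancel the leading terms.

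The second step is to bound the residuals. These are the same terms as in Lemma~\ref{MAC_CS}, but now tested against $W^{2,\infty}$ functions built from $(\vr,\vu)$. This gives contributions of order $\TS$, $h$, and $h^{1+\eps}$ from the upwind diffusion and time stepping. The convective flux errors are bounded using \eqref{nedm1} and \eqref{nedm3} combined with \eqref{unib2}, giving $h\,\|\vrh\|_{L^2L^2}\|\GradB\vuh\|_{L^2L^2}\aleq h^{1+\beta_D-\alpha/2}$ and likewise $h^{1+\beta_M-\alpha/2}$. The artificial viscosity term $h^\alpha\int\GradB\vuh:\GradB\Piv\vu$ is absorbed by Young's inequality into $\tfrac12 h^\alpha\|\GradB\vuh\|^2$ plus $h^\alpha$. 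A sharper alternative is to bound it by $h^{\alpha}\|\GradB\vuh\|_{L^2}\aleq h^{\alpha/2}$ directly through \eqref{unib2}; the rate $\alpha/2$ in \eqref{uncon_e} suggests that this cruder route is the one matching the statement. Summing in time and applying the discrete Gronwall lemma gives \eqref{ineq_error} with $A$ as in \eqref{uncon_e}, using $\RE(0)\aleq h$ from the projection of the initial data. The norm bounds \eqref{uncon_b1} then follow from the standard coercivity of $\Hc(\vrh)-\Hc(\vr)-\Hc'(\vr)(\vrh-\vr)$ when $\vr$ is bounded away from zero, which is quadratic near $\vr$ and of order $|\vrh-\vr|^\gamma$ far from it. For the momentum, I write $\vmh-\vm=\vrh(\avuh-\vu)+(\vrh-\vr)\vu$ and apply H\"older's inequality, using $\sqrt{\vrh}\in L^{2\gamma}$.

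The third step treats bounded numerical solutions. Under $\vrh<\overline\vr$ and $|\vuh|<\overline\vu$, the flux residuals no longer require the negative-norm estimates. Terms of the form $h\int\vrh|\GradB\vuh|$ are bounded by Young's inequality as $\tfrac14h^\alpha\|\GradB\vuh\|^2+ch^{2-\alpha}$ and absorbed by the dissipation. The viscous term is handled the same way, giving $h^\alpha\|\GradB\Piv\vu\|^2\aleq h^\alpha$. This produces $A=\min\{1,1+\eps,2-\alpha,\alpha\}$. Boundedness makes $\RE$ equivalent to $\|\vrh-\vr\|_{L^2}^2+\|\avuh-\vu\|_{L^2}^2$, which gives \eqref{con_b}.

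The main obstacle is the first step, and in particular the exact algebraic cancellation in the staggered setting. The momentum is the cell-centered quantity $\vrh\avuh$, while the test function lives on the dual grid, so the averaged $\overline{\Phi_h}$ and the upwind flux $\Fup[\vmh,\vuh]$ must be matched against the face-wise velocity $\Piv\vu$. The errors produced by replacing $\overline{\Piv\vu}$ by $\vu$ have to be shown to be $O(h)$ times quantities controlled by the dissipation $\tfrac12\int\vrh^{\rm up}|u_\sigma|\jump{\avuh}^2$ or by \eqref{unib2}. My first attempt will be to rewrite these terms using the discrete integration by parts $\intE{F\jump{\phi_h}}=-\intO{\ldots}$ on the primal mesh, and then control each difference by $h\|\Grad^2\vu\|_{L^\infty}$ times the quantities above.
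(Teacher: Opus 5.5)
Your overall route is the paper's. You add the energy balance \eqref{Ener_Sta} to the continuity and momentum equations tested with $\Hc'(\vr)-\frac12|\vu|^2$ and $\vu$, and reduce the remainder to $\norm{\Grad\vu}_{L^\infty}\int_0^\tau\RE\,\dt$ via the strong equations. You bound the residuals as in Lemma~\ref{MAC_CS} through \eqref{unib2}, \eqref{nedm1}, \eqref{nedm3} and close with Gronwall. In the bounded case, Young's inequality absorbs $(h+h^\alpha)\norm{\GradB\vuh}_{L^2L^2}$ into the $h^\alpha$ dissipation, exactly as the paper does.

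The only organizational difference is that you rebuild the inequality at the fully discrete level with $\Piq$, $\Piv$ and a discrete product rule. The paper instead inserts $\varphi=\Hc'(\vr)-\frac12|\vu|^2$ and $\vvarphi=\vu$ into the already-proved \eqref{cs1}--\eqref{cs2} and adds the time-integrated energy balance. The staggered matching of $\overline{\Piv\vu}$ against $\Fup[\vmh,\vuh]$, which you single out as the main obstacle, is already contained in Lemma~\ref{MAC_CS}, so you need not redo it.

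On the viscous term your labels are reversed. Young's inequality gives $h^\alpha$, which is better than the direct bound $h^{\alpha/2}$; the paper uses the direct bound in the first case. Either choice yields at least the stated $A$.

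\textbf{The step that does not deliver the statement: \eqref{uncon_b1} for $\gamma<2$.} The coercivity you invoke (quadratic near $\vr$, of order $|\vrh-\vr|^\gamma$ away from it) gives
$\norm{\vrh-\vr}_{L^\gamma}+\norm{\vmh-\vm}_{L^{\frac{2\gamma}{\gamma+1}}}\aleq\RE^{1/2}+\RE^{1/\gamma}$, which is exactly \eqref{eq:B1}. Since $\RE\aleq\TS+h^A\to0$ and $1/\gamma>1/2$, the term $\RE^{1/2}$ dominates. What you actually obtain is therefore $(\TS+h^A)^{1/2}$, which is weaker than the claimed $(\TS+h^A)^{1/\gamma}$.

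No finer coercivity estimate can close this. Evaluate the relative energy functional at a density $r=\vr+\delta$, with $\delta>0$ a small constant, and the exact velocity. It is of order $\delta^2$, while $\norm{r-\vr}_{L^\gamma}$ is of order $\delta\gg\delta^{2/\gamma}$. So the exponent $1/\gamma$ cannot be deduced from \eqref{ineq_error}. The paper's proof makes the same unjustified passage from \eqref{eq:B1} to $\RE^{1/\gamma}$. The exponent supported by either argument is $1/2$ for every $\gamma$.

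A minor point in your third step: with only the upper bound $\vrh<\overline{\vr}$, $\RE$ does not control $\norm{\avuh-\vu}_{L^2}^2$, because there is no uniform lower bound on $\vrh$. However, \eqref{con_b} needs only $\norm{\vmh-\vm}_{L^2}^2\le 2\overline{\vr}\intO{\vrh|\avuh-\vu|^2}+2\norm{\vu}_{L^\infty}^2\norm{\vrh-\vr}_{L^2}^2\aleq\RE$, which uses the upper bound alone.
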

\begin{remark}\label{rem3}
Note that when the numerical solutions are bounded, the optimal rate of the relative energy is $1$ with the choice of $\alpha=1$ and any $\eps\geq 0$. In this case, the convergence rate of the numerical solution itself is $1/2$. Here, our assumption is weaker than that of~\cite{JovaRoh}.
\end{remark}
\begin{remark}\label{rem4}
In~\cite{LMSY1}, under the assumption of uniform boundedness of both the numerical solution and its total variation, a first-order convergence rate in terms of relative energy was achieved for the first time. However, in our results, only the boundedness of numerical solutions is required to obtain a first-order convergence rate with respect to relative energy. In the current method, the weak BV estimates produced by the Navier--Stokes type artificial diffusion replace the assumption on the total variation.
\end{remark}

\begin{proof}
Combining \eqref{Ener_Sta}, \eqref{cs1} with test function $\varphi=\Hc'(\vr) -\frac{1}{2}|\vu|^2$ and \eqref{cs2} with test function $\vvarphi=\vu $, we obtain
\begin{equation}\label{RE1}
\begin{aligned}
& \left[  \RE\{(\vrh,\vuh)|(\vr,\vu)\}   \right]_0^\tau + h^\alpha\int_0^\tau \intO{|\GradB\vuh|^2} \dt
\\& \leq \int_0^\tau \intOB{\vrh  \pdt  \frac{\abs{\vu}^2}2 + \vmh \cdot \Grad \frac{ \abs{\vu}^2}2 }\dt  \; +   R_\vr \left(\tau, \TS, h, \frac{|\vu|^2}{2} \right)
\\& - \int_0^\tau \intOB{\vrh   \pdt \Hc'(\vr) + \vmh \cdot \Grad \Hc'(\vr)} \dt  \;  -   R_\vr( \tau, \TS, h,\Hc'(\vr) )
\\& - \int_0^\tau \intOB{\vmh \cdot \pdt \vu + \frac{\vmh \otimes \vmh}{\vrh} : \Grad \vu + p(\vrh) \Div \vu} \dt  +  {R_{\vm} (\tau, \TS, h,-\vu)}
\\& + \int_0^\tau \intO{ \pdt \big( \vr \Hc'(\vr) - \Hc(\vr) \big) } \dt 
= e_S + R^E,
\end{aligned}
\end{equation}
where $e_S=    R_\vr \left(\tau, \TS, h,\frac{|\vu|^2}{2}\right) -
R_\vr(\tau, \TS, h, \Hc'(\vr) ) +  R_{\vm} (\tau, \TS, h,-\vu)$ is the consistency error and 
\begin{equation*}
\begin{aligned}
  R^E =  &- \intT{\intO{\big(\vrh (\avuh-\vu) \otimes (\avuh-\vu) : \Grad \vu\\
  &+
   \big(p(\vrh) - p'(\vr) (\vrh - \vr) -p(\vr) \big) \Div \vu} \big)} .
\end{aligned}
\end{equation*}
Thanks to H\"older's inequality we observe the following estimate
\begin{equation*}
\abs{R^E}  \aleq \norm{\Grad \vu}_{L^\infty( (0,T)\times \Omega)} \intT{ \RE\{(\vrh,\vuh)|(\vr,\vu)\}  }.
\end{equation*}
Revisiting the proof of the Lemma \ref{MAC_CS} in Appendix~\ref{App1} we have 
\begin{align*}
    \abs{e_S} \aleq \TS + h +h^{1+\eps} + h^{\frac{\alpha}{2}} + h^{1+\beta_D -\frac{\alpha}{2}} + h^{1+\beta_M -\frac{\alpha}{2}}.
\end{align*}
Substituting the above two estimates into \eqref{RE1} we obtain 
\begin{align*}
 & \RE\{(\vrh,\vuh)|(\vr,\vu)\}(\tau) + h^\alpha\int_0^\tau \intO{|\GradB\vuh|^2} \dt 
 \\ &
 \aleq  e_S   + \RE\{(\vrh,\vuh)|(\vr,\vu)\}(0)   + \intT{ \RE\{(\vrh,\vuh)|(\vr,\vu)\}  } 
 \\& 
 \aleq \TS + h +h^{1+\eps} +h^{\frac{\alpha}{2}} + h^{1+\beta_D -\frac{\alpha}{2}} + h^{1+\beta_M -\frac{\alpha}{2}} + \intT{ \RE\{(\vrh,\vuh)|(\vr,\vu)\}  } ,
\end{align*}
where we have also used the projection error  
$\RE\{(\vrh,\vuh)|(\vr,\vu)\}(0) \aleq  h^2$. 
Consequently, by Gronwall's lemma we have
\begin{multline*}
  \RE\{(\vrh,\vuh)|(\vr,\vu)\}(\tau) + h^\alpha\int_0^\tau \intO{|\GradB\vuh|^2} \dt 
 \aleq   \TS + h^A,\\
 A = \min\{ 1, 1+\eps, \frac{\alpha}{2}, 1+\beta_D -\frac{\alpha}{2}, 1+\beta_M -\frac{\alpha}{2}\}.    
\end{multline*}
Applying \eqref{eq:B1} we have 
\begin{align*}
  &\norm{\vrh-\vr}_{L^\gamma} + \norm{\vmh-\vm}_{L^{\frac{2\gamma}{\gamma+1}}}  \aleq   \RE\{(\vrh,\vuh)|(\vr,\vu)\}^{\frac{1}{\gamma}} \mbox{ for }\gamma\leq2,\\
  &\norm{\vrh-\vr}_{L^2} + \norm{\vmh-\vm}_{L^{\frac{2\gamma}{\gamma+1}}}  \aleq  \RE\{(\vrh,\vuh)|(\vr,\vu)\}^{\frac{1}{2}} \mbox{ for }\gamma\geq2;
\end{align*}
This confirms the first item of the theorem. We now proceed to establish the second item, under the assumption that \((\vrh, \vuh)\) is bounded.
In this case, the residual terms $e_i$ and $\hat e_i$ ($i=1,\dots,4$) arising in the consistency proof
(see Appendix~\ref{App1}, Lemma~\ref{MAC_CS}) can be estimated more sharply as follows:
\begin{equation*}
\begin{aligned}
&|e_1|\aleq h\norm{\vmh}_{L^\infty L^{\frac{2\gamma}{\gamma+1}}}\norm{\varphi}_{C^2}\aleq h,\\
&|e_2|\aleq h\norm{\vrh}_{L^2 L^{2}}\norm{\varphi}_{C^2}\norm{\GradB\vuh}_{L^2 L^2}\aleq h^{1-{\alpha}/{2}},\\
&|e_3|\aleq h\norm{\vrh}_{L^2 L^2}\norm{\varphi}_{C^1}\norm{\GradB\vuh}_{L^2 L^2}\aleq h^{1-{\alpha}/{2}},\\
&|e_4|\aleq h\norm{\vrh}_{L^2 L^2}\norm{\varphi}_{C^1}{ \norm{\GradB\vuh}_{L^2 L^2}}\aleq h^{1-{\alpha}/{2}},\\
&|\hat{e_1}|\aleq h\norm{\vrh|\overline{\vuh}|^2}_{L^1 L^1}\norm{\vvarphi}_{C^2}\aleq h,\\
&|\hat{e_2}|\aleq h\norm{\vrh\overline{\vuh}}_{L^2 L^2}\norm{\GradB\vuh}_{L^2 L^2}\norm{\vvarphi}_{C^2}\aleq h^{1-{\alpha}/{2}},\\
&|\hat{e_3}|\aleq h\norm{\vrh\overline{\vuh}}_{L^2 L^2}\norm{\GradB\vuh}_{L^2 L^2}\norm{\vvarphi}_{C^2}\aleq h^{1-{\alpha}/{2}},\\
&|\hat{e_4}|\aleq h\norm{\vrh\overline{\vuh}}_{L^2 L^2}{ \norm{\GradB\vuh}_{L^2 L^2}}\norm{\vvarphi}_{C^1}\aleq h^{1-{\alpha}/{2}}.\\
\end{aligned}    
\end{equation*}
Consequently, the consistency error $e_S$ satisfies
\begin{align*}
    \abs{e_S} &\aleq \TS + h +h^{1+\eps} + (h+h^\alpha) \norm{\GradB \vuh}_{L^2 L^2}\\
    &\aleq \TS + h+h^{1+\eps} +\frac{h^{2-\alpha}+h^\alpha}{\delta}+ \delta h^\alpha \norm{\GradB \vuh}_{L^2 L^2}^2,\quad \delta\in(0,1).
\end{align*}
Thus, we obtain
\begin{multline*}
  \RE\{(\vrh,\vuh)|(\vr,\vu)\}(\tau) + (1-\delta)h^\alpha\int_0^\tau \intO{|\GradB\vuh|^2} \dt 
 \aleq   \TS + h^A,\\
 A = \min\{ 1, 1+\eps, 2-\alpha, \alpha\} .    
\end{multline*}
Finally, based on \eqref{eq:B2} we have 
\begin{align*}
  \norm{\vrh-\vr}_{L^2} + \norm{\vmh-\vm}_{L^2}  \aleq   \RE\{(\vrh,\vuh)|(\vr,\vu)\}^{\frac{1}{2}}, 
\end{align*}
which completes the proof. 
\end{proof}

\section{Numerical experiments}\label{numerical_ex}
In this section, we conduct a two-dimensional example to illustrate the behavior of the MAC scheme \eqref{MAC_S} and provide numerical evidence consistent with the theoretical estimates. In particular, we consider the following error norms:
\begin{equation}\label{errors}
e_{\vr} \mbox{ = } \norm{\vrh - \vr_{ref}}_{L^\infty L^2 },~
e_{\vm} \mbox{ = } \norm{\vmh - \vm_{ref}}_{L^\infty L^2 }, ~ 
e_{\vu} \mbox{ = } \norm{\vuh - \vu_{ref}}_{L^2 L^2 }
\end{equation}
between the numerical solutions $(\vrh,\vuh)$ and a reference solution $(\vr_{ref},\vu_{ref})$, obtained by the MAC scheme \eqref{MAC_S} on a fine mesh with $h_{ref} \mbox{ = } 1/2048$. The computational domain is $\Omega \mbox{ = } [0,1]^2$ with periodic boundary conditions. 
In our numerical simulations, we set $\eps \mbox{ = }1.2$.
To solve the nonlinear system we use the fixed point iteration method, and in each subiteration we set $\Delta t \mbox{ = }CFL\frac{h}{|\vuh|_{max}+c_h}$ with $CFL\mbox{ = }0.4, c_h=\sqrt{\gamma\vrh^{\gamma-1}}$.

We consider the flow from an initial vortex centered at $\vx_c\mbox{ = } (0.5,0.5)$ with the radius $r_0 \mbox{ = }0.2:$
\begin{equation*}
    \vr_0\mbox{ = }1,~ \vu_0\mbox{ = }\frac{u_r(r)}{r}
    \left(\begin{aligned}
        \vx_2 - 0.5 \\
        0.5 - \vx_1 
    \end{aligned}\right),
    ~
    u_r(r)\mbox{=}\sqrt{\gamma}
    \begin{cases}
2r/r_0 & \mbox{ if } 0\leq r <r_0/2, \\
2(1-r/r_0) &\mbox{ if } r_0/2 \leq r <r_0, \\
0 &\mbox{ if } r \geq r_0,
\end{cases}
\end{equation*}
where $r\mbox{ = }\sqrt{(\vx_1-0.5)^2 + (\vx_2-0.5)^2}.$

We present the errors measured by the norms defined in \eqref{errors} for various $\gamma$ and $\alpha$, as shown in Table~\ref{ex_errors1} ($\alpha=0.5$), Table~\ref{ex_errors2} ($\alpha=0.8$) and Table~\ref{ex_errors3} ($\alpha=1.0$). From these tables, we observe that the optimal convergence rate is achieved when $\alpha=1$, which aligns with our analysis in Theorem~\ref{theo_EE}. Specifically, under the condition of bounded numerical solutions, the best theoretical convergence rate is obtained when $\alpha=1$. Although the theoretical optimal result is $1/2$, which is not as good as the numerical result, this remains the best theoretical result to date, as discussed in Remark~\ref{rem4}.

Moreover, in Fig.~\ref{ex_time} we present the time evolution of the fluid flow originating from the initial vortex. It is evident that the vortex dissipates gradually over time. This is further confirmed by Fig.~\ref{ex_energy}, which illustrates the energy dissipation.

\begin{table}[H]
\caption{Error of the MAC scheme ($T=0.1,~ \alpha=0.5$)}
\label{ex_errors1}
\centering
\renewcommand{\arraystretch}{1.4}
\fontsize{8pt}{6pt}\selectfont
\begin{tabular}{c p{0.9cm} p{1.1cm} p{0.8cm} p{1.1cm} p{0.8cm} p{1.1cm} p{0.8cm}}
    \toprule
    & $h$ &$e_{\vu}$ & rate & $e_{\vm}$ & rate & $e_{\vr}$ & rate \\
    \midrule
    \multirow{6}{*}{$\gamma = 1.4$} 
    & $1/64$ & 7.14e-03 & - & 1.13e-01 & - & 3.84e-02 & -\\
    & $1/128$ & 6.17e-03 & 0.21 & 9.28e-02 & 0.29 & 3.40e-02 & 0.18\\
    & $1/256$ & 4.89e-03 & 0.34 & 7.01e-02 & 0.40 & 2.77e-02 & 0.30\\
    & $1/512$ & 3.36e-03 & 0.54 & 4.63e-02 & 0.60 & 1.95e-02 & 0.51 \\
    \midrule
    \multirow{6}{*}{$\gamma = 1.67$} 
    & $1/64$ & 7.79e-03 & - & 1.23e-01 & - & 4.10e-02 & -\\
    & $1/128$ & 6.72e-03 & 0.21 & 1.01e-01 & 0.29 & 3.62e-02 & 0.18 \\
    & $1/256$ & 5.31e-03 & 0.34 & 7.57e-02 & 0.41 & 2.94e-02 & 0.30 \\
    & $1/512$ & 3.64e-03 & 0.54 & 4.98e-02 & 0.60 & 2.06e-02 & 0.51 \\
    \midrule
    \multirow{6}{*}{$\gamma = 2$} 
    & $1/64$ & 8.52e-03 & - & 1.34e-01 & - & 4.38e-02 & -\\
    & $1/128$ & 7.34e-03 & 0.22 & 1.09e-01 & 0.30 & 3.85e-02 & 0.18\\
    & $1/256$ & 5.79e-03 & 0.34 & 8.18e-02 & 0.41 & 3.12e-02 & 0.30\\
    & $1/512$ & 3.96e-03 & 0.55 & 5.36e-02 & 0.61 & 2.19e-02 & 0.52 \\
    \bottomrule
\end{tabular}
\end{table}
\begin{table}[H]
\centering
\caption{Error of the MAC scheme ($T=0.1,~ \alpha=0.8$)}
\label{ex_errors2}
\renewcommand{\arraystretch}{1.4}
\fontsize{8pt}{6pt}\selectfont
\begin{tabular}{c p{0.9cm} p{1.1cm} p{0.8cm} p{1.1cm} p{0.8cm} p{1.1cm} p{0.8cm}}
    \toprule
        & $h$ &$e_{\vu}$ & rate & $e_{\vm}$ & rate & $e_{\vr}$ & rate \\
        \midrule
        \multirow{6}{*}{$\gamma = 1.4$} 
        & $1/64$ & 8.94e-03 & - & 1.20e-01 & - & 5.59e-02 & -\\
        & $1/128$ & 6.50e-03 & 0.46 & 8.76e-02 & 0.45 & 3.99e-02 & 0.49\\
        & $1/256$ & 4.26e-03 & 0.61 & 5.79e-02 & 0.60 & 2.49e-02 & 0.68\\
        & $1/512$ & 2.42e-03 & 0.82 & 3.30e-02 & 0.81 & 1.33e-02 & 0.90 \\
        \midrule
        \multirow{6}{*}{$\gamma = 1.67$} 
        & $1/64$ & 9.67e-03 & - & 1.28e-01 & - & 5.83e-02 & -\\
        & $1/128$ & 7.03e-03 & 0.46 & 9.37e-02 & 0.45 & 4.13e-02 & 0.50 \\
        & $1/256$ & 4.61e-03 & 0.61 & 6.21e-02 & 0.59 & 2.57e-02 & 0.68 \\
        & $1/512$ & 2.63e-03 & 0.81 & 3.55e-02 & 0.81 & 1.37e-02 & 0.91 \\
        \midrule
        \multirow{6}{*}{$\gamma = 2$} 
        & $1/64$ & 1.05e-02 & - & 1.37e-01 & - & 6.10e-02 & -\\
        & $1/128$ & 7.63e-03 & 0.46 & 1.01e-01 & 0.45 & 4.31e-02 & 0.50\\
        & $1/256$ & 5.02e-03 & 0.61 & 6.70e-02 & 0.59 & 2.67e-02 & 0.69\\
        & $1/512$ & 2.87e-03 & 0.80 & 3.85e-02 & 0.80 & 1.41e-02 & 0.92 \\
        \bottomrule
\end{tabular}
\end{table}
\vspace{-1pt}
\begin{table}[H]
\centering
\caption{Error of the MAC scheme ($T=0.1,~ \alpha=1$)}
\label{ex_errors3}
\renewcommand{\arraystretch}{1.4}
\fontsize{8pt}{6pt}\selectfont
\begin{tabular}{c p{0.9cm} p{1.1cm} p{0.8cm} p{1.1cm} p{0.8cm} p{1.1cm} p{0.8cm}}
    \toprule
        & $h$ &$e_{\vu}$ & rate & $e_{\vm}$ & rate & $e_{\vr}$ & rate \\
        \midrule
        \multirow{6}{*}{$\gamma = 1.4$} 
        & $1/64$ & 6.79e-03 & - & 9.27e-02 & - & 4.36e-02 & -\\
        & $1/128$ & 4.32e-03 & 0.65 & 5.94e-02 & 0.64 & 2.59e-02 & 0.75\\
        & $1/256$ & 2.48e-03 & 0.80 & 3.40e-02 & 0.80 & 1.38e-02 & 0.91\\
        & $1/512$ & 1.25e-03 & 0.99 & 1.69e-02 & 1.00 & 6.41e-03 & 1.10 \\
        \midrule
        \multirow{6}{*}{$\gamma = 1.67$} 
        & $1/64$ & 3.02e-02 & - & 9.97e-02 & - & 4.50e-02 & -\\
        & $1/128$ & 1.93e-02 & 0.65 & 6.42e-02 & 0.64 & 2.67e-02 & 0.76 \\
        & $1/256$ & 1.11e-02 & 0.79 & 3.69e-02 & 0.80 & 1.41e-02 & 0.92 \\
        & $1/512$ & 5.64e-03 & 0.98 & 1.84e-02 & 1.00 & 6.58e-03 & 1.10 \\
        \midrule
        \multirow{6}{*}{$\gamma = 2$} 
        & $1/64$ & 8.07e-03 & - & 1.08e-01 & - & 4.68e-02 & -\\
        & $1/128$ & 5.19e-03 & 0.64 & 6.98e-02 & 0.63 & 2.77e-02 & 0.76\\
        & $1/256$ & 3.02e-03 & 0.78 & 4.03e-02 & 0.79 & 1.47e-02 & 0.92\\
        & $1/512$ & 1.54e-03 & 0.97 & 2.02e-02 & 1.00 & 6.83e-03 & 1.10 \\
        \bottomrule
\end{tabular}
\end{table}
\begin{figure}[H]
    \centering
    \begin{subfigure}[b]{0.32\textwidth}
        \includegraphics[width=0.8\textwidth, height=0.2\textheight,  keepaspectratio]{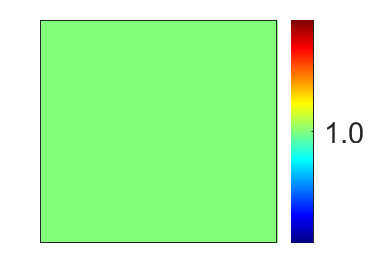}
    \end{subfigure}
    \hspace{-30pt}
    \begin{subfigure}[b]{0.32\textwidth}
        \includegraphics[width=0.8\textwidth, height=0.2\textheight,  keepaspectratio]{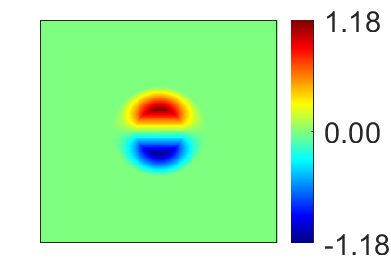}
    \end{subfigure}
    \hspace{-30pt}
    \begin{subfigure}[b]{0.32\textwidth}
        \includegraphics[width=0.8\textwidth, height=0.2\textheight,  keepaspectratio]{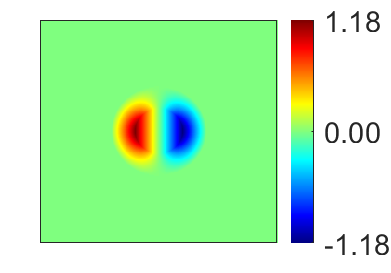}
    \end{subfigure}
     \centering
    \hspace{0.025\textwidth}
    \begin{subfigure}[b]{0.32\textwidth}
        \includegraphics[width=0.8\textwidth, height=0.2\textheight,  keepaspectratio]{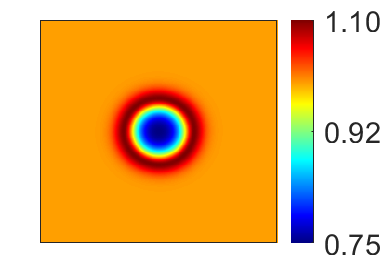}
    \end{subfigure}
    \hspace{-30pt}
    \begin{subfigure}[b]{0.32\textwidth}
        \includegraphics[width=0.8\textwidth, height=0.2\textheight,  keepaspectratio]{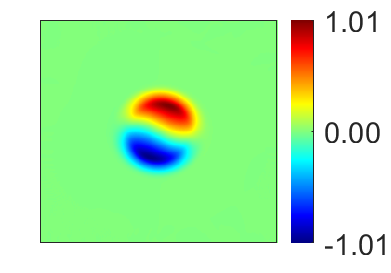}
    \end{subfigure}
    \hspace{-30pt}
    \begin{subfigure}[b]{0.32\textwidth}
        \includegraphics[width=0.8\textwidth, height=0.2\textheight,  keepaspectratio]{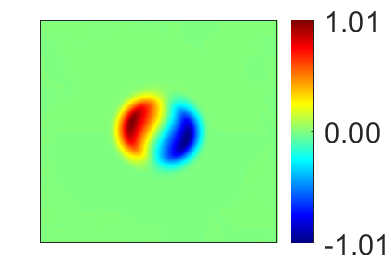}
    \end{subfigure}
    \centering
    \hspace{0.025\textwidth}
    \begin{subfigure}[b]{0.32\textwidth}
        \includegraphics[width=0.8\textwidth, height=0.2\textheight,  keepaspectratio]{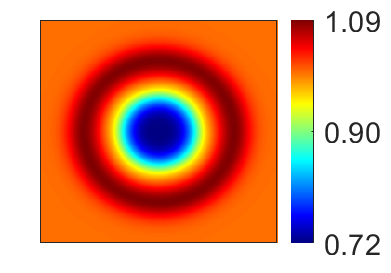}
    \end{subfigure}
    \hspace{-30pt}
    \begin{subfigure}[b]{0.32\textwidth}
        \includegraphics[width=0.8\textwidth, height=0.2\textheight,  keepaspectratio]{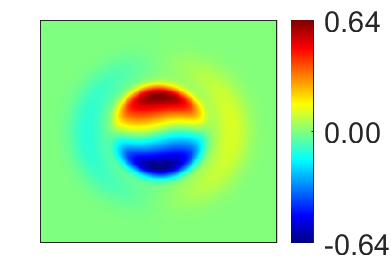}
    \end{subfigure}
    \hspace{-30pt}
    \begin{subfigure}[b]{0.32\textwidth}
        \includegraphics[width=0.8\textwidth, height=0.2\textheight,  keepaspectratio]{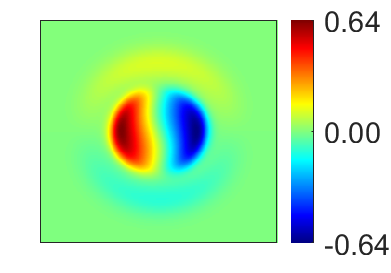}
    \end{subfigure}
 \caption{Time evolution of the flow with $\gamma = 1.4,~ \alpha = 0.8$. From top to down are $t\mbox{ = } 0,~0.05,~ 0.2$. From left to right are $\vr,~\vu_1$, and $\vu_2$.}
 \label{ex_time}
\end{figure}
\begin{figure}[H]
    \hfill
    \centering
    \begin{subfigure}[b]{\textwidth}
        \includegraphics[width=0.8\textwidth]{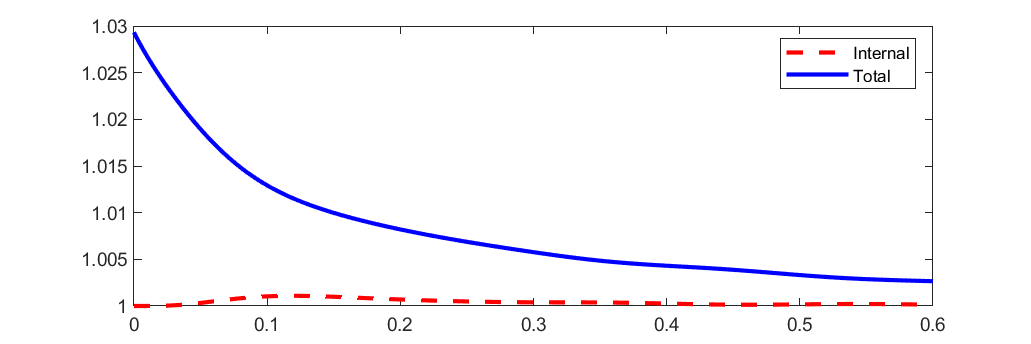}
    \end{subfigure}
    \hfill
    \hfill
    \begin{subfigure}[b]{\textwidth}
        \includegraphics[width=0.8\textwidth]{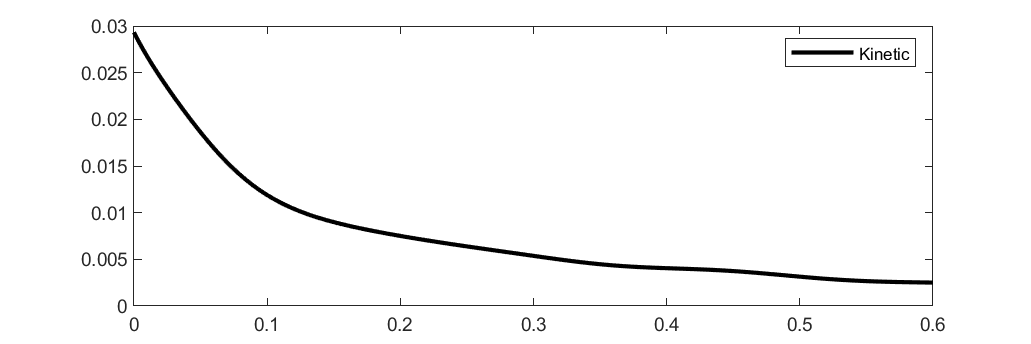}
    \end{subfigure}
    \hfill
 \caption{Time evolution of the total, internal, and kinetic energy with $\gamma = 1.4,~ \alpha = 0.8$ up to $T=0.6$.}
 \label{ex_energy}
\end{figure}
\section{Conclusion}
In this paper, we analyze the convergence and convergence rate of a MAC scheme for the barotropic Euler system. We prove the unconditional convergence of the MAC scheme. Under the additional assumption that the numerical sequence is uniformly bounded, we obtain a convergence rate of $1/2$ for the numerical solution. Previous studies~\cite{LMSY1,LeVeque} have achieved the same rate under stronger assumptions.
A key point of these results is the weak-BV estimate on the velocity, which is provided by the Navier--Stokes type artificial diffusion term.

The unconditional convergence of the MAC scheme is established in Section~\ref{Con_MAC}, where the first main result is presented in Theorem~\ref{theo_Con}. We prove this result by employing a pre-built convergence theorem in the spirit of Lax and Rychtmyer \cite{LaxR}, based on the stability and consistency of the scheme.

Then, utilizing again stability and consistency, we derive estimates for the relative energy functional, which allow us to obtain the theoretical convergence rate of the MAC scheme, see the first result of Theorem \ref{theo_EE}. Furthermore, assuming that the numerical solutions are uniformly bounded, we derive the optimal convergence rate of 1/2, see the second result of Theorem \ref{theo_EE}.

Finally, in the experiment, we consider two-dimensional vortex flow, where we observe a convergence rate of up to 1. It is worth noting that, although our theoretical results only show an optimal convergence rate of 1/2. To the best of our knowledge, this is comparable to the best available relative-energy-based estimate under similar assumptions.

\section*{Acknowledgment}
J. Zhao was supported by the Beijing Natural Science Foundation under Grant No. JR25003 and by the National Natural Science Foundation of China under Grant No. 12301520; B. She and C. Wang were supported by the National Natural Science Foundation of China under Grant No. 12571433.

\appendix

\section{Proof of Lemma~\ref{MAC_CS}}\label{App1}
\begin{proof}
The consistency proof for the MAC scheme closely follows the approach used in our previous work on the compressible Navier--Stokes equations~\cite[Chapter 14.3]{FLMS_book} and~\cite{FLS_IEE}. In this work, we outline the main steps of the proof and emphasize the key differences arising from the absence of estimates on the velocity gradient. The strategy proceeds as follows:
\begin{itemize}
    \item We set $\phi_h=\Pi_Q\varphi$ and $\Phi_h=\Pi_{\mathcal E}\vvarphi$ in \eqref{MAC_SD} and \eqref{MAC_SM}, respectively. The discrete projection $\Pi_{\mathcal E}$ is defined by:
    $$\Piv \bm{f}= (\Piv^{(1)}f_1,\dots,\Piv^{(d)}f_d),\quad \Pivi f = \sum_{\sigma \in \facei}  (\Pivi f)_{\sigma} 1_{D_\sigma},
\quad  (\Pivi f)_{\sigma} =   \frac{ 1}{|\sigma|} \intSh{ f },$$
    \item Reformulate the resulting equations from the previous step by replacing $\Piq \varphi$ and ${\Piv \vvarphi}$ respectively with $\varphi$ and $\vvarphi$. 
    \item Estimate all residual terms in the following steps. 
\end{itemize}


\noindent{\bf The time derivative terms:} 
\begin{equation*}
\begin{aligned}
&\int_0^T \intO{\left( D_t \vrh \Piq\varphi + \vrh\partial_t\varphi\right)} \dt + \intO{\vrh^0\varphi(0,\cdot)}\\
&\aleq \Delta t||\varphi||_{C^2}||\vrh||_{L^1L^1} + \Delta t||\varphi||_{C^1}||\vrh^0||_{L^1} \aleq \Delta t,    
\end{aligned}
\end{equation*}
and
\begin{equation*}
\begin{aligned}
&\int_0^T \intOB{ D_t\vmh\cdot\overline{\Piv\vvarphi} + \vmh\cdot\partial_t\vvarphi } \dt + \intO{\vmh^0 \cdot \vvarphi(0)}\\
&\aleq \Delta t \left( ||\vmh||_{L^1L^1}||\vvarphi||_{C^2} + ||\vmh^0||_{L^1}||\vvarphi||_{C^1} \right) \aleq \Delta t.
\end{aligned}
\end{equation*}
\noindent{\bf The pressure term:} 
\begin{equation*}
\int_0^T \intO{p(\vrh)\Divh\Piv\vvarphi} \dt = \int_0^T \intO{p(\vrh)\Div\vvarphi} \dt.
\end{equation*}

\noindent{\bf The artificial diffusion terms:} 
\begin{equation*}
\begin{aligned}
&h^\eps\int_0^T \intO{\jump{\vrh}\jump{\varphi}} \dt = -h^{\eps+1}\int_0^T \intO{\vrh\Delta_h\varphi} \dt\\
&\aleq h^{\eps+1} ||\vrh||_{L^\infty L^\gamma}||\varphi||_{C^2}\aleq h^{\eps+1},
\end{aligned}
\end{equation*}
\begin{equation*}
\begin{aligned}
&h^\eps\int_0^T \intO{\jump{\vmh}\cdot\jump{\vvarphi}} \dt = -h^{\eps+1}\int_0^T \intO{\vmh\cdot\Delta_h\vvarphi} \dt\\
&\aleq h^{\eps+1} ||\vmh||_{L^\infty L^{\frac{2\gamma}{\gamma+1}}}||\vvarphi||_{C^2}\aleq h^{\eps+1},
\end{aligned}
\end{equation*}
$\GradD, \Delta_h$ denotes the gradient and Laplace operators for any $r_h\in\Qh$, defined as follows
\begin{equation*}
\begin{aligned}
&\GradD r_h(\bfx) = \left( \eth_{{\cal D}_1} r_h, \ldots,  \eth_{{\cal D}_d} r_h \right)(\bfx),  \\
\Delta_h r_h = &\sum_{i=1}^d\Delta_h^{(i)} r_h,\quad \Delta_h^{(i)} r_h = \pdmeshi(\pdduali r_h).  
\end{aligned}
\end{equation*}
where
\begin{equation*}
 \pdedgei r_h (\bfx) = \sum_{\sigma \in \edgesi}1_{D_\sigma} (\pdedgei r_h)_{\sigma} ,  \quad  \ (\pdedgei r_h)_{\sigma}  = \frac{r_{L} - r_{K}}{h}, \quad \sigma=K|L\in \edgesi.
\end{equation*}


\noindent{\bf The convective terms:}
Applying Lemma 8.1 of \cite{FLMS_book}, we obtain
\begin{equation*}
\begin{aligned}
\intE{{\rm Up}[\vrh, \vuh]\jump{\Piq\varphi}} &= 
\intO{\vrh\vuh\cdot \GradD(\Piq \varphi)} \\
&+\frac{h}{2}\sum_{i=1}^d\sum_{K\in\mesh}\int_K \vrh\Delta_h^{(i)}(\Piq\varphi)\overline{|\uih|} dx\\
&+\frac{h}{2}\sum_{i=1}^d\sum_{K\in\mesh}\int_K \vrh \overline{\pdduali(\Piq\varphi)}\pdmeshi|\uih| dx,
\end{aligned}
\end{equation*}
and
\begin{equation*}
\begin{aligned}
\intE{{\rm Up}[(\vrh\overline{\uih}), \vuh]\jump{\overline{\Pivi\varphi_i}}} &=
\intO{\vrh\overline{\uih}\vuh\cdot \GradD(\Piq\Pivi\varphi_i)}\\
&+\frac{h}{2}\sum_{j=1}^d\sum_{K\in\mesh}\int_K (\vrh\overline{\uih})\Delta_h^{(j)}(\Piq\Pivi\varphi_i)\overline{|\ujh|} dx\\
&+\frac{h}{2}\sum_{j=1}^d\sum_{K\in\mesh}\int_K (\vrh\overline{\uih})\overline{\pddualj(\Piq\Pivi\varphi_i)}\pdmeshj|\ujh| dx.
\end{aligned}
\end{equation*}

Based on the above two equations, H\"older's inequality and the uniform bounds in Corollary \ref{cor_unib}, we can obtain

\begin{equation*}
\begin{aligned}
&\int_0^{ t^{n+1}}\intE{{\rm Up}[\vrh, \vuh]\jump{\Piq\varphi}}\dt - \int_0^{ t^{n+1}}\intO{\vrh\overline{\vuh} \cdot \Grad\varphi}\dt 
\\
&
=\int_0^{t^{n+1}} \intO{\vrh\overline{\vuh}\cdot\left( \GradD(\Piq \varphi)-\Grad\varphi \right)} \dt\\
&+\frac{h}{2}\sum_{i=1}^d\int_0^{t^{n+1}} \sum_{K\in\mesh}\intK{\vrh\Delta_h^{(i)}(\Piq\varphi)|\overline{\uih}|} \dt\\
&+\frac{h}{2}\sum_{i=1}^d\int_0^{t^{n+1}} \sum_{K\in\mesh}\intK{\vrh \overline{\pdduali(\Piq\varphi)}\pdmeshi|\uih|} \dt\\
&+\int_0^{t^{n+1}} \intO{\vrh(\vuh - \overline{\vuh})\cdot\GradD(\Piq \varphi)} \dt =:\sum_{i=1}^4e_i,
\end{aligned}
\end{equation*}
and the terms $e_i, i=1,2,3,4$ are controlled as follows
\begin{equation*}
\begin{aligned}
&|e_1|\aleq h\norm{\vmh}_{L^\infty L^{\frac{2\gamma}{\gamma+1}}}\norm{\varphi}_{C^2}\aleq h,\\
&|e_2|\aleq h\norm{\vrh}_{L^2 L^{2}}\norm{\varphi}_{C^2}\norm{\GradB\vuh}_{L^2 L^2}\aleq h^{1+\beta_D-{\alpha}/{2}},\\
&|e_3|\aleq h\norm{\vrh}_{L^2 L^2}\norm{\varphi}_{C^1}\norm{\GradB\vuh}_{L^2 L^2}\aleq h^{1+\beta_D-{\alpha}/{2}},\\
&|e_4|\aleq h\norm{\vrh}_{L^2 L^2}\norm{\varphi}_{C^1}{ \norm{\GradB\vuh}_{L^2 L^2}}\aleq h^{1+\beta_D-{\alpha}/{2}}.
\end{aligned}    
\end{equation*}

\begin{equation*}
\begin{aligned}
\int_0^T \intE{&{\rm Up}[\vmh, \vuh]\cdot\jump{\overline{\Piv\vvarphi}} } \dt -\int_0^T\intO{(\vrh\overline{\vuh}\otimes\vuh):\Grad\vvarphi}\dt=\\
&+\sum_{i=1}^d\int_0^T \intO{\vrh\overline{\uih}\vuh\cdot\left( \GradD(\Piq\Pivi\varphi_i)-\Grad\varphi_i \right)} \dt\\
&+\sum_{i=1}^d\sum_{j=1}^d\int_0^T \frac{h}{2}\sum_{K\in\mesh}\int_K (\vrh\overline{\uih})\Delta_h^{(j)}(\Piq\Pivi\varphi_i)|\overline{\ujh}| \dx \dt\\
&+\sum_{i=1}^d\sum_{j=1}^d\int_0^T \frac{h}{2}\sum_{K\in\mesh}\int_K (\vrh\overline{\uih})\overline{\pddualj(\Piq\Pivi\varphi_i)}\pdmeshj|\ujh| \dx \dt\\
&+\int_0^T \intO{\vrh\overline{\vuh}\otimes(\vuh - \overline{\vuh}):\Grad\vvarphi} \dt =:
\sum_{i=1}^4 \hat{e}_i,
\end{aligned}
\end{equation*}
The terms $\hat e_i$, $i=1,2,3$, are controlled as follows:
\begin{equation*}
\begin{aligned}
&|\hat{e_1}|\aleq h\norm{\vrh|\overline{\vuh}|^2}_{L^1 L^1}\norm{\vvarphi}_{C^2}\aleq h,\\
&|\hat{e_2}|\aleq h\norm{\vrh\overline{\vuh}}_{L^2 L^2}\norm{\GradB\vuh}_{L^2 L^2}\norm{\vvarphi}_{C^2}\aleq h^{1+\beta_M-{\alpha}/{2}},\\
&|\hat{e_3}|\aleq h\norm{\vrh\overline{\vuh}}_{L^2 L^2}\norm{\GradB\vuh}_{L^2 L^2}\norm{\vvarphi}_{C^2}\aleq h^{1+\beta_M-{\alpha}/{2}},\\
&|\hat{e_4}|\aleq h\norm{\vrh\overline{\vuh}}_{L^2 L^2}{ \norm{\GradB\vuh}_{L^2 L^2}}\norm{\vvarphi}_{C^1}\aleq h^{1+\beta_M-{\alpha}/{2}}.\\
\end{aligned}    
\end{equation*}

\noindent{\bf The viscosity term:} 
\begin{equation*}
\begin{aligned}
h^\alpha\int_0^T \intO{\GradB \vuh : \GradB \Piv \vvarphi} \dt \aleq h^\alpha{ \norm{\GradB\vuh}_{L^2 L^2}} \norm{\GradB \Piv \vvarphi}_{L^2 L^2} \aleq h^{{\alpha}/{2}}.
\end{aligned} 
\end{equation*}

Collecting the above estimates finishes the proof.
\end{proof}

\section{Relative energy norm}
In this section, we recall an inequality that controls the error in the conservative variables in terms of the relative energy, following \cite[Lemma~C.1]{FLS_IEE}.

\begin{lemma}\label{lem:B1}
Let $\gamma>1$ and $(r,\bm{U})$ satisfy
\[
\underline r=\frac{1}{2}\min_{(t,x)\in (0,T)\times\Omega} r,
\qquad
\overline r=2\max_{(t,x)\in (0,T)\times\Omega} r,
\qquad
\overline u=\max_{(t,x)\in (0,T)\times\Omega}|\bm{U}|
\]
for some positive constants $\overline u$, $\underline r$, $\overline r$.

\begin{itemize}
\item If $\vr>0$ and $\int_{\mathbb{T}^d}\vr^\gamma\,\mathrm{d}x \le E_0$
then the following estimates hold:
\begin{subequations}\label{eq:B1}
\begin{align}
\|\vr-r\|_{L^\gamma}
+
\|\vm-\mathbf{M}\|_{L^{\frac{2\gamma}{\gamma+1}}}
&\lesssim
\bigl(\RE(\vr,\vu\mid r,\bm{U})\bigr)^{\frac12}\notag\\
&+
\bigl(\RE(\vr,\vu\mid r,\bm{U})\bigr)^{\frac1\gamma},
\qquad \text{for } \gamma\le 2,\\
\|\vr-r\|_{L^2}
+
\|\vm-\mathbf{M}\|_{L^{\frac{2\gamma}{\gamma+1}}}
&\lesssim
\bigl(\RE(\vr,\vu \mid r,\bm{U})\bigr)^{\frac12},
\qquad \text{for } \gamma\ge 2,
\end{align}
\end{subequations}
where $\vm=\vr \vu$ and $\mathbf{M}=r\bm{U}$.

\item In addition, let $\vr<\overline\vr$. Then
\begin{equation}\label{eq:B2}
\|\vr-r\|_{L^2}
+
\|\vm-\mathbf{M}\|_{L^2}
\lesssim
\bigl(\RE(\vr,\vu\mid r,\bm{U})\bigr)^{\frac12}.
\end{equation}
\end{itemize}
\end{lemma}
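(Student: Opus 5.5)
The statement is Lemma~\ref{lem:B1}, a pointwise-to-integral bound of the relative energy. I would prove it by splitting $\Omega$, at each time, into an ``essential'' set where $\vr\in[\underline r,\overline r]$ and a ``residual'' set where $\vr<\underline r$ or $\vr>\overline r$. I would then use the elementary structure of $E(\vr|r):=\Hc(\vr)-\Hc(r)-\Hc'(r)(\vr-r)$ on each piece. Since $\Hc''(z)=\gamma z^{\gamma-2}$, the key pointwise facts, valid uniformly for $r\in[2\underline r,\overline r/2]$, are:
\begin{itemize}
\item on the essential set, $E(\vr|r)\gtrsim|\vr-r|^2$;
\item on the residual set, $E(\vr|r)\gtrsim 1+\vr^\gamma$, and in particular $E(\vr|r)\gtrsim|\vr-r|^\gamma$ and $E(\vr|r)\gtrsim|\vr-r|$.
\end{itemize}
Both follow from Taylor's formula and the strict convexity and superlinear growth of $\Hc$.

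For the density in the case $\gamma\le2$, I would first estimate the essential part: $\int_{ess}|\vr-r|^\gamma\le|\Omega|^{1-\gamma/2}\left(\int_{ess}|\vr-r|^2\right)^{\gamma/2}\lesssim\RE^{\gamma/2}$. This gives the contribution $\RE^{1/2}$ to $\|\vr-r\|_{L^\gamma}$. The residual part gives $\int_{res}|\vr-r|^\gamma\lesssim\RE$, hence the term $\RE^{1/\gamma}$.

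For $\gamma\ge2$, the residual set requires $|\vr-r|^2\lesssim E(\vr|r)$ there. This holds because for large $\vr$ we have $|\vr-r|^2\lesssim\vr^\gamma$, and for small $\vr$ the difference $|\vr-r|$ is bounded. So $\|\vr-r\|_{L^2}\lesssim\RE^{1/2}$ directly. If $\vr<\overline\vr$ also holds, the $L^2$ bound follows in both regimes by the same argument, since on the bounded range $\vr\le\overline\vr$ again $|\vr-r|^2\lesssim E(\vr|r)$.

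For the momentum, I would write
\[
\vm-\mathbf M=\vr(\vu-\bm U)+(\vr-r)\bm U .
\]
The second term is controlled by $\overline u\,|\vr-r|$ and handled as above. For $L^{2\gamma/(\gamma+1)}$ with $\gamma\le2$, this requires a short interpolation between the $L^\gamma$ and $L^2$ pieces on the bounded domain; this is routine because $2\gamma/(\gamma+1)\le\min(\gamma,2)$ when $\gamma\ge1$. For the first term I would use Hölder's inequality with exponents $\tfrac{2\gamma}{\gamma+1}=\left(\tfrac12+\tfrac1{2\gamma}\right)^{-1}$:
\[
\|\vr(\vu-\bm U)\|_{L^{2\gamma/(\gamma+1)}}\le\|\sqrt\vr(\vu-\bm U)\|_{L^2}\,\|\sqrt\vr\|_{L^{2\gamma}}\lesssim\RE^{1/2}E_0^{1/(2\gamma)}.
\]
Under the bound $\vr<\overline\vr$, the estimate $\|\vr(\vu-\bm U)\|_{L^2}\le\overline\vr^{1/2}\|\sqrt\vr(\vu-\bm U)\|_{L^2}$ gives the $L^2$ momentum claim \eqref{eq:B2}.

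The main obstacle I anticipate is the uniformity of the residual-set lower bound $E(\vr|r)\gtrsim1+\vr^\gamma$ near $\vr=0$. I would handle it by noting that $E(0|r)=(\gamma-1)^{-1}r^\gamma(\gamma-1)>0$ is bounded below for $r\ge2\underline r$, and that $E(\cdot|r)$ is monotone on $[0,r]$. I would then combine this with continuity and compactness in $r\in[2\underline r,\overline r/2]$.
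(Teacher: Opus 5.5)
Your argument is correct and is essentially the standard essential/residual splitting that underlies \cite[Lemma~C.1]{FLS_IEE}, which the paper cites instead of proving. It uses quadratic coercivity of $E(\vr|r)$ on $\{\underline r\le\vr\le\overline r\}$, the bound $E(\vr|r)\gtrsim 1+\vr^\gamma$ off that set, and the H\"older split of $\vm-\mathbf M=\vr(\vu-\bm U)+(\vr-r)\bm U$. Two small fixes: since $E(\cdot|r)$ is decreasing on $[0,r]$, $E(0|r)=r^\gamma$ is the maximum on $[0,\underline r]$, not the minimum, so the uniform lower bound there is $E(\vr|r)\ge E(\underline r|r)=\frac{r^\gamma}{\gamma-1}f(\underline r/r)\ge\frac{(2\underline r)^\gamma}{\gamma-1}f(1/2)>0$ with $f(s)=s^\gamma-\gamma s+\gamma-1$; and the momentum step needs no interpolation, only $\|\cdot\|_{L^{2\gamma/(\gamma+1)}}\le\|\cdot\|_{L^{\min\{\gamma,2\}}}$ on the unit torus.
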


\end{document}